\newtheorem{theo}{Theorem}[section]
\newtheorem{defi}[theo]{Definition}
\newtheorem{prop}[theo]{Proposition}
\newtheorem{lemm}[theo]{Lemma}
\newtheorem{rem}[theo]{Remark}
\newcommand{\al}{\alpha}
\newcommand{\be}{\beta}
\newcommand{\Ga}{\Gamma}
\newcommand{\om}{\omega}
\newcommand{\Om}{\Omega}
\newcommand{\si}{\sigma}
\newcommand{\ep}{\epsilon }
\newcommand{\te}{\theta}
\newcommand{\De}{\Delta}
\newcommand{\de}{\delta}
\newcommand{\R}{{\mathbb R}^n}
\newcommand{\ri}{\rightarrow}
\newcommand{\Rn}{{\mathbb R}^{n-1}}
\newcommand{\na}{\nabla}
\newcommand{\Rb}{{\mathbb R}^n }
\newcommand{\E}{{\mathbb E}}
\newcommand{\N}{{\mathbb N}}
\newcommand{\bP}{{\mathbb P}}
\newcommand{\Z}{{\mathbb Z}}
\newcommand{\bR}{{\mathbb R}}
\newcommand{\cC}{{\mathcal C}}
\newcommand{\cF}{{\mathcal F}}
\newcommand{\cL}{{\mathcal L}}
\newcommand{\cP}{{\mathcal P}}
\newcommand{\cS}{{\mathcal S}}
\newcommand{\cX}{{\mathcal X}}
\newcommand{\abs}[1]{{\left\vert #1 \right\vert}}
\newcommand{\inp}[1]{{\left\langle #1 \right\rangle}}
\newcommand{\norm}[1]{{\left\Vert #1 \right\Vert}}
\newcommand{\set}[1]{{\left\lbrace #1 \right\rbrace}}
\newcommand{\Rnp}{{\mathbb R}^{n}_{+}}
\newcommand{\qand}{\quad \text{ and } \quad}
\newcommand{\qfor}{\quad \text{ for }}
\newcommand{\intzi}{\int_{0}^{\infty}}
\newcommand{\intRn}{\int_{{\mathbb R}^n}}
\newcommand{\sumjz}{\sum_{j \in \Z}}
\newcommand{\supjz}{\sup_{j \in \Z}}
\newcommand{\p}{\partial}
\DeclareMathOperator*{\divg}{div \,}
\begin{document}
\baselineskip=18pt

\title[Stochastic Navier--Stokes equations]{On initial-boundary value problem of the stochastic Navier--Stokes equations in the half space}

\author{Tongkeun Chang \and Minsuk Yang}

\address{Yonsei University, Department of Mathematics, Mathematics, Yonsei University, 50 Yonseiro, Seodaemungu, Seoul, Republic of Korea}

\email{chang7357@yonsei.ac.kr, m.yang@yonsei.ac.kr}

\begin{abstract}
We study the initial-boundary value problem of the stochastic Navier--Stokes equations in the half space.
We prove the existence of weak solutions in the standard Besov space valued random processes when the initial data belong to the critical Besov space.
\end{abstract}

\maketitle

\section{Introduction}
\label{S1}

In this paper we study the following stochastic Navier--Stokes equations
\begin{equation}
\label{E11}
du(t,x) = \big(\De u(t,x) - \na p(t,x) + \divg (u\otimes u)(t,x) \big) dt + g(t,x) dB_t
\end{equation}
for $(t,x) \in (0,\infty) \times \Rnp$, $n \ge 2$, with the boundary condition $u(t,x) = 0$ for $(t,x) \in (0,\infty) \times \Rn$, the initial condition $u(0,x) = u_0$ for $x\in \Rnp$, and the random noise $g(t,x) d B_t$.
Here $\set{B_t(\omega) : t\ge 0,\omega\in\Omega}$ denotes an $n$-dimensional Wiener process defined on a probability space $\Omega$.
The Navier--Stokes system is considered as a reliable model both in science and engineering.
In spite of huge effort done by countless mathematicians, many mathematical issues related to the Navier--Stokes equations including uniqueness and regularity remain as open questions.
Moreover, one of the important issues is to understand turbulence in fluid motions.
Although turbulent phenomena can be frequently observed in a lot of real world situations, it is extremely hard to  define or characterize them mathematically.
Because chaotic behavior in fluid motions may be interpreted as a presence of randomness, many interests on the stochastic Navier--Stokes equations have been increased.

The study of the stochastic Navier--Stokes equations started around early 70s', for example, by Bensoussan and Temam \cite{MR0348841} and Foias \cite{MR352733}.
Although many mathematicians made important contributions, we list here only a few of them.
Capi\'{n}ski and Gatarek \cite{MR1305062} obtained an existence theorem for the stochastic Navier--Stokes equations in a Hilbert space setting.
Mikulevicius and Rozovskii \cite{MR2118862} proved the existence of a global weak (martingale) solution of the stochastic Navier--Stokes equation.
Kim \cite{MR2815038} established the existence of local strong solutions to the stochastic Navier--Stokes equations in $\bR^3$ when the initial data are sufficiently small and slightly more regular.
Taniguchi \cite{MR2837686} studied the energy solutions to the stochastic Navier--Stokes equations in two dimensional unbounded domains.
For a recent overview, we refer the reader to the lecture notes or monographs by Flandoli \cite{MR2459085}, Kuksin and Shirikyan \cite{MR3443633}, and Breit, Feireisl, Hofmanov\'{a} \cite{MR3791804} and the references cited therein.

For the deterministic Navier--Stokes equations, Fujita and Kato \cite{MR166499} proved existence of mild solutions.
Many papers continued to study such solutions with wider class of initial data, which should be in some special classes due to the scaling structure of the Navier--Stokes equations.
Adapting harmonic analysis tools, for example, Littlewood--Paley theory, Fujita--Kato's result extended to the refined function space setting like Besov spaces.
For this, we refer the reader the monograph by Cannone \cite{MR1688096}.
After that, Koch and Tataru \cite{MR1808843} extends the class of initial data to $BMO^{-1}$.
This line of development is well illustrated in the monograph by Bahouri, Chemin, and Danchin \cite{MR2768550}.

Recently, Du and Zhang \cite{MR4002154} proved existence of solutions to the stochastic Navier--Stokes equations in the whole space.
Due to a technical reason they found solutions in Chemin--Lerner type spaces with the norm defined by a weighted sum of space-time integration of Littlewood--Paley projections.
Our aim of this paper is to establish existence of solutions to the stochastic Navier--Stokes equations in the half space.
We found our solutions in the standard Besov spaces.

To state precise concept of solutions we need to recall some standard definitions.
Let $(\Om, {\mathcal G},\{{\mathcal G}_t\}, \bP)$ be a probability space, where $\{{\mathcal G}_t : t\geq 0\}$ is a filtration of $\si$-fields ${\mathcal G}_t \subset {\mathcal G}$ with $\mathcal{G}_0$ containing all $\bP$-null subsets of $\Om$.
Assume that $w_{\cdot}$ is a one-dimensional $\{\mathcal G_t \}$-adapted Wiener processes defined on $(\Om, {\mathcal G}, \bP)$.
We denote by $\E X$ the expectation of a random variable $X$.
We suppress the argument $\omega\in\Omega$ of random variable $X(\omega)$ when it brings no confusion.

We note that the solution $u$ and data $(u_0,g)$ in (\ref{E11}) are random variables.
We construct suitable spaces for them using the Besov spaces.
We shall use two different types of spaces.
The first type emphasizes the regularity in $x$ whereas the second type does the joint regularity in $(t,x)$.
We can consider $u$ and $g$ as Banach space-valued stochastic processes.
Hence $(\Omega\times(0,\infty),\mathcal{P}, \bP \bigotimes\ell(0,\infty))$ is a suitable choice for their common domain, where $\cP$ is the predictable $\si$-field generated by $\{{\mathcal G}_t : t \geq 0\}$ (see, for instance, pp. 84--85 of \cite{MR1661766}) and $\ell(0,\infty)$ is the Lebesgue measure on $(0,\infty)$.

For a Banach space $X$ and $\al \in {\mathbb R}$ we define the stochastic Banach spaces $\cL^r_\al( \Om\times (0,T), \cP; X)$ to be the space of $X$-valued processes with the norm
\[
\norm{f}_{\cL ^r_\al( \Om\times (0,T), \cP; X)}
= \Bigg(\E \int_0^T s^{\al r} \norm{f(s,\cdot)}_X^rds \Bigg)^{1/r}.
\]
Here is the definition of solutions.

\begin{defi}[Local solution]
\label{D21}
Let $n \ge 2$ and $2 \leq p, q \leq \infty$.
Assume that $u_0$ is a $\dot B^{-1 +\frac{n}p}_{pq0}(\Rnp)$-valued $\cF_0$-measurable random variable.
We say that $(u, \tau)$ is a local solution for the stochastic Navier-Stokes equations \eqref{E11} if $u \in \cL ^q_\al(\Om \times (0, \infty), \cP ; L^p(\Rnp))$
is a progressively measurable process and for any $0 \leq t < \infty$ and
\[
\tau(\om) = \inf \set{0 \le T \le \infty : \norm{u(\om)}_{L^q_\al(0, T; L^p(\Rnp))} \ge R}
\]
where $R $ is a positive number, and for almost surely, $u \in L^q_\al (0, \tau; L^p(\Rnp))$ satisfies
\begin{align*}
&\int_{\Rnp} u(x, t\wedge \tau) \Phi(x) dx - \inp{u_0, \Phi(\cdot,0)} \\
&= \int^{t\wedge \tau}_0 \int_{\Rnp} \left(u\cdot \Delta \Phi + u\cdot (\Phi_t+(u\otimes u):\nabla \Phi)\right) dxdt
+ \int_0^{t\wedge \tau} \inp{g,\Phi} dB_t
\end{align*}
for every $\Phi \in C^\infty_0(\overline{\R}_+\times [0,\tau(\om))$ with $\mbox{div}_x\,\Phi=0$, $\Phi\big|_{x_n=0}=0$,
where $\inp{\cdot,\cdot}$ denotes the duality pairing.
\end{defi}

The exact definitions and notations will be given in the next section.
Here are our main theorems.

\begin{theo}
\label{T1}
Let $n < p < \infty$, $2 < q \leq \infty$, and $2\al = 1 -\frac{n}p -\frac2q \ge 0$.
Assume $u_0 \in {\mathbb B}_{pq0}^{-1 +\frac{n}p} (\Rnp)$ with $\divg u_0 = 0$ and $g \in  \cL _{\al_2}^{q}(\Om\times (0,\infty), \cP; L^{p_2} (\Rnp))$ with $\divg g = 0$ where $p_2$ and $\al_2$ satisfy
\[
p_2 \leq p, \quad \al \leq \al_2, \quad \bigg(\frac{n}{p_2}-\frac{n}{p}\bigg) + 2(\al_2-\al) = 1.
\]
Given a positive number $\ep$, there exists a positive number $\delta$ such that if
\begin{align*}
\|u_0\|_{ {\mathbb B}_{pq0}^{-1 +\frac{n}p} (\Rnp)} + \| g \|_{\cL _{\al_2}^{q_2}( \Om\times (0,\infty), \cP; L^{p_2} (\Rnp))} < \de,
\end{align*}
then there is a local unique solution $(u,\tau)$ of the initial boundary value problem \eqref{E11} in the sense Definition \ref{D21} such that $\bP (\tau > 0) =1$ and $\bP(\tau =\infty) \ge 1 -\ep$.
\end{theo}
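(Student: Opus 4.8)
The plan is to recast \eqref{E11} in its mild (Duhamel) form and to solve the resulting fixed-point problem by contraction in $\cL^q_\al(\Om\times(0,\infty),\cP;L^p(\Rnp))$. Let $S(t)$ denote the Stokes semigroup on the half space, that is, the semigroup generated by the Dirichlet Laplacian acting on divergence-free fields, and let $\Pi$ be the Helmholtz projection. Eliminating the pressure through $\Pi$ and applying Duhamel's principle, a mild solution is a fixed point of
\[
\Phi(u)(t) = S(t)u_0 + \int_0^t S(t-s)\,\Pi\,\divg(u\otimes u)(s)\,ds + \int_0^t S(t-s)\,\Pi\, g(s)\,dB_s,
\]
where the last integral is an It\^o integral. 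I abbreviate the three summands as $S(t)u_0$, $B(u,u)$, and $N(g)$. Since $u_0$ is $\cF_0$-measurable and $B(u,u)$ is a pathwise Bochner integral, the first two terms reduce to deterministic half-space estimates applied $\om$-wise; only $N(g)$ genuinely uses the martingale structure. One checks along the way that $\Phi$ sends progressively measurable processes to progressively measurable processes, so the iteration stays in the admissible class.

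The argument rests on three bounds, all in the norm $\norm{\cdot}_{\cL^q_\al}$ (with values in $L^p(\Rnp)$). First, the linear bound $\norm{S(\cdot)u_0}_{\cL^q_\al} \lesssim \norm{u_0}_{{\mathbb B}_{pq0}^{-1+n/p}}$, which is the semigroup characterization of the critical Besov space and passes through $\E$ because $u_0$ is $\cF_0$-measurable. Second, the bilinear bound $\norm{B(u,v)}_{\cL^q_\al} \lesssim \norm{u}_{\cL^q_\al}\norm{v}_{\cL^q_\al}$, obtained pathwise from the $L^{p/2}\to L^p$ smoothing of $S(t)\Pi\,\divg$ together with a weighted Hardy-type time inequality calibrated exactly by $2\al = 1 - n/p - 2/q$. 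Third, the stochastic bound $\norm{N(g)}_{\cL^q_\al} \lesssim \norm{g}_{\cL^{q_2}_{\al_2}}$ (with values in $L^{p_2}$). Granting these, I fix the threshold $R$ small enough that the bilinear constant $C_B$ satisfies $2C_B R < 1$; then, for data with $\norm{u_0}_{{\mathbb B}_{pq0}^{-1+n/p}} + \norm{g}_{\cL^{q_2}_{\al_2}} < \de$ small, $\Phi$ maps the ball $\{\norm{u}_{\cL^q_\al}\le R\}$ into itself and contracts it, yielding a unique global-in-time mild solution $u$ with $\norm{u}_{\cL^q_\al} \le C\de$. Matching this mild solution to the weak formulation of Definition \ref{D21} is then a routine integration by parts against divergence-free, boundary-vanishing test functions.

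The main obstacle is the stochastic bound. Because $p > n \ge 2$, the space $L^p(\Rnp)$ has martingale type $2$, so the Burkholder--Davis--Gundy inequality gives
\[
\E\,\norm{N(g)(t)}_{L^p}^q \lesssim \E\bigg(\int_0^t \norm{S(t-s)\,\Pi\, g(s)}_{L^p}^2\,ds\bigg)^{q/2},
\]
which reduces the estimate to a deterministic weighted space-time inequality. Feeding in the half-space smoothing bound $\norm{S(t-s)\Pi\, h}_{L^p} \lesssim (t-s)^{-\frac12(n/p_2 - n/p)}\norm{h}_{L^{p_2}}$ and then controlling the weighted convolution in $t$ is exactly where the hypothesis $(n/p_2 - n/p) + 2(\al_2 - \al) = 1$ is consumed: it forces the kernel singularity and the two time weights to balance so that a Hardy--Littlewood--Sobolev (or weighted Young) argument closes. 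Establishing the underlying half-space smoothing and projection estimates with the correct homogeneity --- the Stokes kernel being considerably more delicate than the Gaussian --- is the technical heart, and I expect these to be provided in the preliminary section.

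The two probabilistic claims then follow from the size of $u$. Since $\E\,\norm{u}_{L^q_\al(0,\infty;L^p)}^q = \norm{u}_{\cL^q_\al}^q < \infty$, for almost every $\om$ the path lies in $L^q_\al(0,\infty;L^p)$, and absolute continuity of the integral gives $\norm{u(\om)}_{L^q_\al(0,T;L^p)} \to 0$ as $T \to 0^+$; hence $\tau(\om) > 0$ almost surely, i.e. $\bP(\tau > 0) = 1$. For the global statement, Chebyshev's inequality yields
\[
\bP(\tau < \infty) = \bP\big(\norm{u}_{L^q_\al(0,\infty;L^p)} \ge R\big) \le R^{-q}\,\E\,\norm{u}_{L^q_\al(0,\infty;L^p)}^q \le (C\de/R)^q,
\]
so choosing $\de \le R\,\ep^{1/q}/C$ makes the right-hand side at most $\ep$, giving $\bP(\tau = \infty) \ge 1 - \ep$.
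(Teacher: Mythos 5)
Your outline reproduces the broad architecture of the paper (Duhamel/kernel representation, Burkholder--Davis--Gundy plus heat-kernel smoothing and a Hardy--Littlewood--Sobolev balance for the noise term, absolute continuity of the integral for $\bP(\tau>0)$, Chebyshev for $\bP(\tau=\infty)\ge 1-\ep$), but the central step --- the contraction --- has a genuine gap. You claim the bilinear bound $\norm{B(u,v)}_{\cL^q_\al}\lesssim\norm{u}_{\cL^q_\al}\norm{v}_{\cL^q_\al}$ is ``obtained pathwise.'' Pathwise one indeed gets $\norm{B(u,v)(\om)}_{L^q_\al(0,\infty;L^p)}\lesssim\norm{u(\om)}_{L^q_\al}\norm{v(\om)}_{L^q_\al}$, but the norm of $\cL^q_\al(\Om\times(0,\infty),\cP;L^p)$ is $(\E\norm{\cdot}_{L^q_\al}^q)^{1/q}$, so raising the pathwise bound to the $q$-th power and taking expectations produces $\E\big(\norm{u}_{L^q_\al}^q\norm{v}_{L^q_\al}^q\big)$, which is \emph{not} controlled by $\E\norm{u}_{L^q_\al}^q\cdot\E\norm{v}_{L^q_\al}^q$. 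Even on the diagonal $u=v$ you need $\E\norm{u}_{L^q_\al}^{2q}$, whereas membership in your ball only controls the $q$-th moment; a process with a heavy tail in $\om$ defeats the self-map and the Lipschitz estimate alike. So the fixed-point scheme, as you have set it up, does not close.

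This is exactly the difficulty the paper's proof is built around. It replaces the nonlinearity by $\divg(\chi_v\, v\otimes v)$ with the cutoff $\chi_{v(\om)}(t)=\te\big(\norm{v(\om)}_{L^q_\al(0,t;L^p)}\big)$, which vanishes once the running pathwise norm exceeds $2R$. This yields the \emph{deterministic}, $\om$-uniform bounds $\norm{\chi_v v\otimes v}_{L^{q/2}_{2\al}(0,\infty;L^{p/2})}\le 4R^2$ and $\norm{\chi_u u\otimes u-\chi_v v\otimes v}_{L^{q/2}_{2\al}}\lesssim R\norm{u-v}_{L^q_\al}$, which do survive the expectation and give a contraction on $\cC_R\subset\cL^q_\al$ via the linear solvability of Proposition \ref{P31}. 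The truncation is not a technical afterthought: it is also what makes the construction match Definition \ref{D21}, since the truncated solution solves the true equation precisely up to the stopping time $\tau$ at which $\norm{u}_{L^q_\al(0,T;L^p)}$ first reaches $R$. To repair your argument you would either need to introduce such a truncation (or stopping-time localization) yourself, or work in a space with an $L^\infty$-in-$\om$ structure; in the space $\cL^q_\al$ you chose, the untruncated global contraction is not available. The remaining pieces of your proposal (the linear, stochastic, and probabilistic steps) are consistent with the paper once this is fixed.
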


\begin{theo}
\label{T2}
In addition, if $g \in \cL^q(\Om\times (0,\infty), \cP; \dot B_{pq0}^{-2\al-1}(\Rnp))$, then the solution $u$ obtained in Theorem \ref{T1} belongs to $L^q(0, \tau(\om); \dot B^{-2\al}_{pq} (\Rnp))$ almost surely.
\end{theo}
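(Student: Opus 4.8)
The plan is to read off the extra regularity directly from the mild (integral) representation of the solution underlying the fixed-point construction of Theorem \ref{T1}. Writing $S(t)$ for the solution operator of the linear Stokes system in the half space (the heat semigroup composed with the Helmholtz projection $\Pi$ and adapted to the Dirichlet condition), the solution satisfies, for $t < \tau(\om)$,
\[
u(t) = S(t) u_0 + \int_0^t S(t-s)\, \Pi\,\divg (u\otimes u)(s)\, ds + \int_0^t S(t-s)\,\Pi\, g(s)\, dB_s .
\]
I would estimate each of the three terms in $L^q(0,\tau(\om); \dot B^{-2\al}_{pq}(\Rnp))$ separately and bound the corresponding $q$-th moments, which yields almost-sure membership in the target space.

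For the linear term $S(t)u_0$ I would use the Littlewood--Paley (heat-semigroup) characterization of the homogeneous Besov space. Since $\|S(t)\De_j u_0\|_{L^p} \lesssim e^{-c t 2^{2j}}\|\De_j u_0\|_{L^p}$, integrating in time gives
\[
\int_0^\infty \|S(t) u_0\|_{\dot B^{-2\al}_{pq}}^q\, dt
\approx \sum_{j} 2^{-2\al jq}\|\De_j u_0\|_{L^p}^q \int_0^\infty e^{-c q t 2^{2j}}\, dt
\approx \|u_0\|_{\dot B^{-1+\frac np}_{pq}}^q ,
\]
where the last equivalence uses $-2\al = -1+\frac np + \tfrac2q$, so that the factor $2^{-2j}$ produced by the time integral shifts the smoothness index by exactly $2/q$. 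Taking expectations and using $u_0 \in {\mathbb B}^{-1+\frac np}_{pq0}(\Rnp)$ places this term in $L^q(0,\infty;\dot B^{-2\al}_{pq})$ almost surely.

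For the stochastic convolution I would invoke a stochastic maximal $L^q$-regularity estimate for $S(t)$: the It\^o integral is worth one spatial derivative in the parabolic scaling, so the map $g \mapsto \int_0^\cdot S(\cdot - s)\Pi g(s)\,dB_s$ carries $\cL^q(\Om\times(0,\infty),\cP; \dot B^{-2\al-1}_{pq0})$ into $\cL^q(\Om\times(0,\infty),\cP; \dot B^{-2\al}_{pq})$. The extra hypothesis $g\in\cL^q(\Om\times(0,\infty),\cP;\dot B^{-2\al-1}_{pq0})$ is tailored exactly to this gain of one derivative, so this term has finite $q$-th moment in the desired norm. For the bilinear term I would reuse the bilinear estimate behind Theorem \ref{T1}, now measured in the target space: deterministic maximal regularity for $S(t-s)\Pi\divg(\cdot)$ combined with the bound $\|u\|_{L^q_\al(0,\tau;L^p)} < R$ valid on $[0,\tau(\om))$ controls $\int_0^\cdot S(\cdot-s)\Pi\divg(u\otimes u)\,ds$ in $L^q(0,\tau(\om);\dot B^{-2\al}_{pq})$ by a constant multiple of $R^2$, almost surely.

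The main obstacle is the stochastic convolution step: establishing sharp stochastic maximal regularity in the homogeneous Besov scale on the half space, compatible with the Stokes rather than purely heat semigroup and with the Dirichlet boundary condition. Matching the parabolic scaling of the bilinear term---measured in the weighted norm $L^q_\al(L^p)$ in Theorem \ref{T1} but required here without time weight and with the stronger spatial Besov norm---is the second delicate point; here the semigroup characterization used for the linear term furnishes the bridge between the two scales.
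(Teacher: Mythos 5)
Your decomposition is the same one the paper uses: the paper writes $u=v+V+u_2$ with $v$ the Stokes evolution of $u_0$, $V$ the Duhamel term for $\mathbb{P}\,\mathrm{div}(u\otimes u)$, and $u_2$ the stochastic convolution, and it estimates the three pieces separately in $L^q(0,\infty;\dot B^{-2\alpha}_{pq}(\mathbb{R}^n_+))$. Your treatment of the linear term is essentially the paper's Lemma \ref{L51}: the dyadic heat-kernel decay plus time integration trades $2^{-2j}$ for a shift of $2/q$ in the smoothness index (Lemma \ref{L24}). One caveat even here: the half-space Stokes kernel is not the heat semigroup but Solonnikov's kernel \eqref{E37}, which contains Newtonian-potential and Riesz-transform corrections; the bound $\|S(t)\Delta_j u_0\|_{L^p}\lesssim e^{-ct2^{2j}}\|\Delta_j u_0\|_{L^p}$ does not apply to it verbatim, and the paper has to split $v=v_1+v_2+v_3$ and control the correction terms in the negative-order Besov norm via the potential estimate of Lemma \ref{L22} before the heat-semigroup argument can be run.

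The genuine gaps are the two points you yourself flag as ``obstacles,'' because they are not side issues: they are the entire content of the paper's Section \ref{S5}. First, the stochastic convolution. You invoke an abstract stochastic maximal regularity theorem carrying $\dot B^{-2\alpha-1}_{pq0}$ into $\dot B^{-2\alpha}_{pq}$; no such off-the-shelf result is available for the half-space Stokes semigroup with Dirichlet conditions in the homogeneous Besov scale, and the paper instead proves the blockwise estimate \eqref{E56} by hand: Burkholder--Davis--Gundy, Minkowski's integral inequality, the localized kernel bound of Lemma \ref{L23}, and Young's inequality in time produce the factor $(c2^{2j})^{-q/2}$, which is exactly the one-derivative gain per dyadic block. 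Second, the bilinear term. It cannot be ``reused'' from Theorem \ref{T1}: there the Duhamel term is measured in $L^q_\alpha(0,\infty;L^p)$, whereas here you need it in the unweighted $L^q(0,\infty;\dot B^{-2\alpha}_{pq})$ norm while only assuming $\|u\|_{L^q_\alpha(0,\tau;L^p)}\le R$. Passing from the weighted $L^p$-in-space information to the unweighted negative-order Besov norm is a new estimate (the paper's \eqref{E55}), proved by duality against $L^{q'}(\dot B^{2\alpha}_{p'q'0})$, the semigroup splitting $\Gamma_{t-s}=\Gamma_{(t-s)/2}*\Gamma_{(t-s)/2}$, Lemma \ref{L25}, and the weighted Hardy--Littlewood--Sobolev inequality (Lemma \ref{L26}) to absorb the time weight $s^{\alpha_1}$. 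Your proposal correctly identifies where the difficulties sit but supplies neither argument, so as written it is an outline of the paper's proof rather than a proof.
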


\begin{rem}
\begin{enumerate}
\item
Both theorems contains the limiting exponent $q =\infty$.
\item
The standard Besov spaces are used throughout the paper (cf. \cite{MR4002154}).
\item
The regularity exponent of the initial data is negative.
\item
The exponents in the main theorems are rather complicated and restrictive due to the Hardy--Littlewood type embedding and scaling structure of governing equations.
\end{enumerate}
\end{rem}

The paper is organized as follows.
In Section \ref{S2} we present preliminaries including a few estimates for the Newtonian potential and the Heat kernel and a representation of the Helmholtz projection operator.
In Section \ref{S3} we prove the existence of the solutions to the linearized Navier--Stokes equations in half space.
In Section \ref{S4} and \ref{S5} we prove Theorem \ref{T1} and \ref{T2}, respectively.

\section{Preliminaries}
\label{S2}
\setcounter{equation}{0}

Let $\cS(\R)$ denote the Schwartz space on $\R$ and $\cS'(\R)$ the space of tempered distributions, which is the dual space of $\cS(\R)$.
The Fourier transform of $f \in \cS(\R)$ is initially defined by
\[
\widehat{f}(\xi) = \int_{\R} e^{-2\pi i x \cdot \xi} f(x) dx,
\]
and extended to the dual space $\cS'(\R)$.

\begin{defi}[Besov spaces on $\R$]
Let $\Psi \in \cS(\R)$ be a radial Schwartz function whose Fourier transform is
nonnegative, supported in the ball $|\xi|\le 2$, equal to $1$ in the ball
$|\xi|\le1$.
Let
\[
\widehat\Phi(\xi)=\widehat\Psi(\xi)-\widehat\Psi(2\xi).
\]
We denote the Littlewood-Paley projection operator by 
\[
\Delta_jf(x)
=\intRn e^{2\pi ix\cdot\xi}\widehat\Phi(2^{-j}\xi)\widehat f(\xi)\;d\xi \qfor j \in \Z.
\]
Notice that 
\[
\sum_{j \in \Z} \widehat\Phi(2^{-j}\xi) = 1, \qquad \text{ for } \xi \neq 0.
\]
We define for $s \in \bR$ and $1 \leq p, q \le \infty$, the homogeneous Besov space $\dot B^s_{pq}(\R)$, which is the set of $f \in \cS'(\R)$ with the norm
\[
\norm{f}_{\dot B^s_{pq} (\R)}
= 
\begin{cases}
\Bigg( \sum_{j \in \Z} (2^{sj} \norm{\De_j f}_{L^p(\R)})^q \Bigg)^{1/q} < \infty & \qfor q<\infty, \\
\sup_{j \in \Z} 2^{sj} \norm{\De_j f}_{L^p(\R)} < \infty & \qfor q=\infty.
\end{cases}
\]
\end{defi}

%

\begin{defi}[Besov spaces on $\Rnp$]
For $1 \leq p, q \le \infty$ and $s \in \bR$, the space
\[
\dot B^s_{pq}(\Rnp) = \{ F|_{\Rnp} : F \in \dot B^s_{pq}(\Rb)\}
\]
is defined as the set of restrictions of $F \in \dot B^s_{pq}(\R)$ with the norm
\[
\norm{f}_{\dot B^s_{pq}(\Rnp)} = \inf \set{\norm{F}_{\dot B^s_{pq}(\Rb)} : F \in \dot B^s_{pq}(\Rb), ~ F|_{\Rnp} =f}.
\]
For $1 \leq p, q < \infty$ and $s \in \bR$, the space ${\dot B^s_{pq0}}(\Rnp)$ is the closure of $C^\infty_c(\Rnp)$ in $\dot B^s_{pq}(\Rnp)$.

We denote by ${\mathbb B}_{pq0}^{-1 +\frac{n}p} (\Rnp)$ the stochastic Besov space with the norm
\[
\norm{f}_{{\mathbb B}_{pq0}^{-1 +\frac{n}p} (\Rnp)} = \E \norm{f}_{B_{pq0}^{-1 +\frac{n}p} (\Rnp)}
\]
\end{defi}
		
\begin{rem}
Let $0 \le s < \infty$, $1 \leq p, q \le \infty$, and let $p', q'$ denote the H\"older conjugates of $p$, $q$, respectively.
Then
\[
(\dot B^s_{p'q'}(\Rnp))' = \dot B^{-s}_{pq0} (\Rnp) \qand (\dot B^s_{p'q'0}(\Rnp))' = \dot B^{-s}_{pq}(\Rnp).
\]
\end{rem}

\begin{defi}
Let $X$ be a Banach space and $I$ an interval in $\bR$.
For $1 \le r \le \infty$ and $0 \le \al < \infty$, we denote by $L^r_\al(I;X)$ the weighted Bochner space with the norm
\[
\norm{f}_{L^r_\al(I;X)} = \Bigg(\int_I s^{\al r} \norm{f(s)}_{X}^r ds\Bigg)^{\frac{1}{r}}.
\]
In particular, if $\al =0$, then it is the same as the usual Bochner space, i.e., $L^r_0(I;X) = L^r(I;X)$.
\end{defi}

\begin{defi}[Newtonian potential]
The fundamental solution of the Laplace equation in $\R$ is defined by
\[
N(x) = \left\{\begin{array}{ll}
\vspace{2mm}
\frac{1}{\sigma_n (2-n)|x|^{n-2}} & \mbox{if } n\geq 3,\\
\frac{1}{2\pi}\ln |x| & \mbox{if } n=2.\end{array}\right.
\]
Here $\sigma_n$ denotes the surface area of the unit sphere in $\R$.
\end{defi}

\begin{lemm}[Lemma 3.3 in \cite{MR3959494}]
\label{L21}
If  $1 < p < \infty$ and  $0 \le s < \infty$, then
\[
\norm{\nabla_x^2 \int_{\Rnp} N(\cdot-y) f(y) dy}_{\dot H^{s}_{p}(\Rnp)}
\lesssim \norm{f}_{\dot H^{s}_{p}(\Rnp)},
\]
where $\dot H^{s}_{p}(\Rnp)$ is the space of all restriction of $f \in \dot H^{s}_{p}(\R)$ on $\Rnp$, which is the standard homogenous factional Sobolev space defined by the Fourier inversion.
(In particular, $\dot H^0_{p}(\Rnp) = L^p(\Rnp)$.)
\end{lemm}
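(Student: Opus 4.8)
The plan is to realize the operator as a matrix of second-order Riesz transforms acting on the zero-extension of $f$, settle the case $s=0$ by Calder\'on--Zygmund theory, and then reach general $s$ by using the Laplace equation to trade normal derivatives for tangential ones.

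First I would write $u = N * \tilde f$, where $\tilde f$ is the extension of $f$ by zero to $\R$, so that $\int_{\Rnp} N(\cdot-y) f(y)\,dy = u|_{\Rnp}$. Since $\De N = \delta_0$, each entry $\p_i\p_j u$ is the Fourier multiplier with bounded symbol $\xi_i\xi_j/|\xi|^2$ applied to $\tilde f$; that is, $\nabla^2 u = R \tilde f$ for a matrix $R$ of second-order Riesz transforms. These are classical Calder\'on--Zygmund operators, bounded on $\dot H^s_p(\R)$ for $1<p<\infty$ and every $s \ge 0$. For $s=0$ the claim is then immediate: zero-extension is an isometry $L^p(\Rnp) \to L^p(\R)$, whence $\norm{\nabla^2 u}_{L^p(\Rnp)} \le \norm{R\tilde f}_{L^p(\R)} \lesssim \norm{\tilde f}_{L^p(\R)} = \norm{f}_{L^p(\Rnp)}$. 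The same argument in fact covers the whole range $0 \le s < 1/p$, since there the zero-extension still maps $\dot H^s_p(\Rnp)$ boundedly into $\dot H^s_p(\R)$, and $R\tilde f$ serves as an admissible extension of $\nabla^2 u$ when computing the restriction norm.

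For general $s$ the naive argument fails, because zero-extension is no longer bounded on $\dot H^s_p$ once $s \ge 1/p$; this is the crux. To get around it I would exploit the identity $\De u = f$ on $\Rnp$ (a consequence of $\De N = \delta_0$). Writing $v = \p_i\p_j u$ and $\De' = \sum_{k<n}\p_k^2$ for the tangential Laplacian, differentiation gives $\p_n^2 v = \p_i\p_j f - \De' v$ on $\Rnp$. For an integer $s=m$, using the equivalence of the restriction norm with the intrinsic norm $\sum_{|\beta|=m}\norm{\p^\beta v}_{L^p(\Rnp)}$, I would iterate this identity to rewrite every $\p^\beta v$ as a sum of order-$m$ derivatives of $f$ together with purely tangential derivatives ${\p'}^{\gamma} v$. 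The tangential derivatives commute with the zero-extension, ${\p'}^{\gamma} \tilde f = \widetilde{{\p'}^{\gamma} f}$, so ${\p'}^{\gamma} v = R\big(\widetilde{{\p'}^{\gamma} f}\big)\big|_{\Rnp}$ is controlled in $L^p$ by the case already proved. Complex interpolation of the $\dot H^s_p(\Rnp)$ scale between these integer estimates and the range $s<1/p$ then yields the bound for all $s \ge 0$.

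The main obstacle is the bookkeeping in this iteration when an odd number of normal derivatives survives: reducing $\p_n^{2k+1}$ leaves one genuine normal derivative on $\tilde f$, producing a boundary term of the form $R\big[({\p'}^{\gamma} f)|_{x_n=0}\,d\sigma_{\{x_n=0\}}\big]$, a singular integral against a surface measure whose restriction to the open half space is a layer-type potential. Estimating these trace terms in $L^p(\Rnp)$ by $\norm{f}_{\dot H^m_p(\Rnp)}$ --- via the trace theorem and the mapping properties of the single-layer potential and its derivatives --- is the technical heart of the argument; alternatively one arranges the interpolation so that only tangential (even) reductions and the benign range $s<1/p$ are invoked, confining the delicate trace contributions to endpoints that can be treated by duality.
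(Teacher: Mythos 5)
First, a point of order: the paper does not prove Lemma \ref{L21} at all --- it is imported as Lemma 3.3 of \cite{MR3959494} --- so there is no internal proof to compare yours against, and I can only judge the proposal on its own terms. Your overall strategy (realize $\nabla_x^2\int_{\Rnp}N(\cdot-y)f(y)\,dy$ as the restriction of $R\widetilde{f}$ for a matrix $R$ of second-order Riesz transforms acting on the zero extension $\widetilde{f}$; settle $0\le s<1/p$ by Calder\'on--Zygmund theory plus boundedness of the zero extension; handle integer $s=m$ by using $\De u=f$ on $\Rnp$ to trade normal derivatives for tangential ones; interpolate) is the standard route to this kind of half-space estimate and is essentially viable.

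The gap sits exactly where you place it, but your diagnosis and both proposed repairs are off. No surface-measure or layer-potential terms arise in the odd case, because the argument never requires you to move the stray normal derivative onto $\widetilde{f}$. After iterating $\p_n^2 w=\p^{\alpha'}f-\De' w$ you are left with a term of total order $m+2$ of the form $\p_n(\p')^{\gamma}u$ with $|\gamma|=m+1\ge 1$; peel off one \emph{tangential} derivative $\p_k$ from $\gamma$ and keep the pair $\p_n\p_k$ on the kernel, writing $\p_n(\p')^{\gamma}u=\p_n\p_k N*\widetilde{(\p')^{\gamma-e_k}f}$ with $|\gamma-e_k|=m$. Here $\p_n\p_k N*(\cdot)$ is again a bounded second-order Riesz transform on $L^p(\R)$, and $(\p')^{\gamma-e_k}$ commutes with the zero extension, so this term is controlled by the intrinsic $\dot H^m_p(\Rnp)$ norm of $f$ with no trace contribution whatsoever. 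The boundary terms you describe would only appear if you differentiated $\widetilde{f}$ in $x_n$, which you never need to do. Your fallback --- arranging the interpolation so that only ``even'' reductions occur --- does not work as stated: multi-indices with an odd number of normal derivatives occur at \emph{every} integer order $m\ge 1$ (e.g.\ $\p_1\p_n$ already at order two), so there is no integer level at which the odd case can be dodged. Finally, two things you leave implicit are doing real work and should be stated: the equivalence of the restriction norm of $\dot H^m_p(\Rnp)$ with the intrinsic seminorm $\sum_{|\beta|=m}\norm{\p^\beta\cdot}_{L^p(\Rnp)}$, and the complex-interpolation identity for the restriction scale $\dot H^s_p(\Rnp)$; both rest on a common Stein-type extension operator bounded on the homogeneous scale.
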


\begin{lemm}
\label{L22}
If $1 < p < \infty$, $1 \le q \le \infty$, and  $0 < s <  1/p'$, then
\begin{align*}
\norm{\nabla_x^2 \int_{\Rnp} N(\cdot-y) f(y) dy}_{\dot B^{-s}_{pq} (\Rnp)}
&\lesssim \norm{f}_{ \dot B^{-s}_{pq}(\Rnp)}, \\
\norm{\nabla_x^2 \int_{\Rnp} N(\cdot-y^*) f(y) dy}_{\dot B^{-s}_{pq} (\Rnp)}
&\lesssim \norm{f}_{ \dot B^{-s}_{pq}(\Rnp)}.
\end{align*}
where $p'$ is the H\"older conjugate of $p$, i.e., $1/p + 1/p' = 1$.
\end{lemm}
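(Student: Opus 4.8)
The plan is to prove both estimates by a duality argument that transfers the problem to the positive-smoothness Sobolev estimate of Lemma \ref{L21}, exploiting that the admissible range $0 < s < 1/p'$ is precisely the range in which the boundary plays no role. Write $Tf(x) = \nabla_x^2 \int_{\Rnp} N(x-y) f(y)\,dy$, which, after extending $f$ by zero to $\R$, equals $R_+(\nabla^2 N * \widetilde f)$ where $R_+$ denotes restriction to $\Rnp$ and $\widetilde f$ the zero extension. By the duality stated in the Remark, $\dot B^{-s}_{pq}(\Rnp) = (\dot B^s_{p'q'0}(\Rnp))'$, so
\[
\norm{Tf}_{\dot B^{-s}_{pq}(\Rnp)} = \sup \set{ |\langle Tf, \varphi\rangle| : \varphi \in \dot B^s_{p'q'0}(\Rnp),\ \norm{\varphi}_{\dot B^s_{p'q'0}(\Rnp)} \le 1 }.
\]
Because $\partial_{x_i}\partial_{x_j} N(x-y) = \partial_{y_i}\partial_{y_j} N(x-y)$ and $N$ is even, transposing the derivatives onto the other factor shows that $T$ is formally self-adjoint, $\langle Tf, \varphi\rangle = \langle f, T\varphi\rangle$. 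Hence it suffices to show $T$ maps $\dot B^s_{p'q'0}(\Rnp)$ boundedly into itself for $0 < s < 1/p'$, since then $|\langle f, T\varphi\rangle| \le \norm{f}_{\dot B^{-s}_{pq}(\Rnp)} \norm{T\varphi}_{\dot B^s_{p'q'0}(\Rnp)} \lesssim \norm{f}_{\dot B^{-s}_{pq}(\Rnp)}$.

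The key observation is that for $0 < s < 1/p'$ there is no boundary trace on the $p'$-Besov scale (the trace threshold for integrability $p'$ being $1/p'$), so $C^\infty_c(\Rnp)$ is dense and $\dot B^s_{p'q'0}(\Rnp) = \dot B^s_{p'q'}(\Rnp)$; this is exactly where the upper bound $s < 1/p'$ enters. It therefore remains to bound $T$ on $\dot B^s_{p'q'}(\Rnp)$, which I would obtain by real interpolation: choosing $0 \le s_0 < s < s_1 < \infty$ and $\theta \in (0,1)$ with $s = (1-\theta)s_0 + \theta s_1$, one has $\dot B^s_{p'q'}(\Rnp) = (\dot H^{s_0}_{p'}(\Rnp), \dot H^{s_1}_{p'}(\Rnp))_{\theta, q'}$, and Lemma \ref{L21} (applicable since $1 < p' < \infty$ and $s_0, s_1 \ge 0$) gives boundedness of $T$ on each endpoint. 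Interpolation then yields boundedness on $\dot B^s_{p'q'}(\Rnp)$ for every $1 \le q' \le \infty$, covering the limiting index $q = \infty$ as well.

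For the reflected kernel I would use the radial identity $N(x - y^*) = N(x^* - y)$, where $z^* = (z', -z_n)$. Writing $G = N * \widetilde f$, the reflected operator is $x \mapsto \nabla_x^2 [G(x^*)]$, and since $x \mapsto x^*$ is an isometry of the homogeneous Sobolev and Besov scales (it only relabels the normal derivative with a harmless sign), the $\dot H^s_{p'}$-estimate needed for its adjoint again reduces to Lemma \ref{L21}, after which the duality and interpolation steps are identical. Alternatively, the reflected estimate reduces directly to the first one through the same identity.

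The main obstacle is the rigorous justification of the self-adjointness and of the pairing $\langle Tf, \varphi\rangle = \langle f, T\varphi\rangle$ on the homogeneous negative-order space: one must make sense of $T$ applied to a distribution of order $-s$ (through the zero extension, which is admissible precisely because $-s > -1/p'$), control the ambiguity modulo polynomials inherent in the homogeneous spaces, and verify the density identity $\dot B^s_{p'q'0}(\Rnp) = \dot B^s_{p'q'}(\Rnp)$ underpinning the duality. Once these structural points are secured, the quantitative content is entirely supplied by Lemma \ref{L21} together with real interpolation.
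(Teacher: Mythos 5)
Your proposal is correct and follows essentially the same route as the paper: real interpolation of Lemma \ref{L21} to get boundedness of $T$ on the positive-order Besov scale $\dot B^{s}_{p'q'}(\Rnp)$, then duality via the symmetry of the kernel $\nabla^2 N(x-y)$, combined with the identification of the zero-trace Besov space with the full Besov space in the range $0<s<1/p'$ (the paper performs this identification at the level $\dot B^{-s}_{pq0}=\dot B^{-s}_{pq}$, you at the dual level $\dot B^{s}_{p'q'0}=\dot B^{s}_{p'q'}$, which is the same fact). Your treatment of the reflected kernel via $N(x-y^*)=N(x^*-y)$ matches the paper's terse remark that the second estimate is proved the same way.
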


\begin{proof}
Applying the real interpolation (see for example Section 2 in \cite{CJ}) to Lemma \ref{L21} we get
\[
\norm{\nabla_x^2 \int_{\Rnp} N(\cdot-y) f(y) dy}_{ \dot B^{s}_{p'q'} (\Rnp)}
\lesssim \norm{f}_{ \dot B^{s}_{p'q'}(\Rnp)}.
\]
Since the operator
\[
Tf(x) := \nabla_x^2 \int_{\Rnp} N(x-y) f(y) dy
\]
is symmetric, we use the duality argument to obtain
\[
\norm{\nabla_x^2 \int_{\Rnp} N(\cdot-y) f(y) dy}_{ \dot B^{-s}_{pq0} (\Rnp)}
\lesssim \norm{f}_{ \dot B^{-s}_{pq0}(\Rnp)}.
\]
We note that if $-1/p < s < 1/p'$, then
\[
\dot B^{-s}_{pq0} (\Rnp) = \dot B^{-s}_{pq}(\Rnp).
\]
This proves the first estimate and the proof of the second estimate is almost the same.
\end{proof}

\begin{defi}[Heat kernel]
The fundamental solution of the heat equation in $\R$ is defined by
\[
\Gamma_t(x) = \Gamma(x,t) =
\begin{cases}
(2\pi t)^{-\frac{n}{2}} e^{-\frac{|x|^2}{4t}} & \text{ if } t>0 \\
0 & \text{ if } t \le 0.
\end{cases}
\]
\end{defi}

We define
\begin{align*}
\Gamma_t * f(x) &= \intRn\Gamma(x-y,t)f(y) dy, \\
\Gamma_t^* * f(x) &= \intRn\Gamma(x-y^*,t)f(y) dy.
\end{align*}
The following lemma is a consequence of a pointwise estimate of the localized heat kernel,
which is obtained by the standard integration by parts argument.

From now on we simply write $\norm{f}_p$ instead of $\norm{f}_{L^p(\R)}$.

\begin{lemm}
\label{L23}
There exists a positive constant $c$ such that for all $1 \le p \le \infty$, $t > 0$, and $j \in \Z$
\[
\norm{\De_j (\Ga_t * f)}_p
\lesssim \exp(-ct2^{2j}) \norm{\De_j f}_p.
\]
\end{lemm}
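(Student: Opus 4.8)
The plan is to reduce the estimate to a single scalar quantity, namely the $L^1$-norm of a rescaled convolution kernel, and then exploit the frequency localization of $\De_j$ to extract exponential decay in $t2^{2j}$. First I would observe that $\De_j$ and the heat semigroup $\Ga_t*$ are both Fourier multipliers, hence they commute, so $\De_j(\Ga_t*f) = \Ga_t*(\De_j f)$. Since $\De_j f$ has Fourier support in the annulus $\{2^{j-1}\le |\xi| \le 2^{j+1}\}$ (the support of $\widehat\Phi(2^{-j}\cdot)$), I can insert a second projection $\widetilde\De_j$ whose symbol $\widetilde\va$ equals $1$ on this annulus and is smooth and compactly supported away from the origin, so that $\De_j(\Ga_t*f) = \widetilde\De_j\,\Ga_t*(\De_j f)$. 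By Young's convolution inequality,
\[
\norm{\De_j(\Ga_t*f)}_p = \norm{(\widetilde\De_j \Ga_t)*(\De_j f)}_p \le \norm{\widetilde\De_j\Ga_t}_1 \norm{\De_j f}_p,
\]
which holds uniformly in $1\le p\le\infty$. This isolates the entire problem into estimating the single quantity $\norm{\widetilde\De_j\Ga_t}_1$, where $\widetilde\De_j\Ga_t$ is the kernel whose Fourier transform is $\widetilde\va(2^{-j}\xi)e^{-4\pi^2 t|\xi|^2}$.

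Next I would compute this $L^1$-norm by rescaling. Writing $K_{j,t}(x) = \intRn e^{2\pi i x\cdot\xi}\widetilde\va(2^{-j}\xi)e^{-4\pi^2t|\xi|^2}\,d\xi$ and substituting $\xi = 2^j\eta$ gives, after the corresponding change of variables $x = 2^{-j}z$ in the spatial integral,
\[
\norm{K_{j,t}}_1 = \intRn \left| \intRn e^{2\pi i z\cdot\eta}\widetilde\va(\eta)e^{-4\pi^2 t2^{2j}|\eta|^2}\,d\eta\right| dz.
\]
Setting $s = t2^{2j}$, it suffices to show that the function $m_s(\eta) = \widetilde\va(\eta)e^{-4\pi^2 s|\eta|^2}$ has inverse Fourier transform with $L^1$-norm bounded by $C e^{-cs}$ for a fixed $c>0$, uniformly in $s>0$. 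Because $\widetilde\va$ is supported in an annulus bounded away from the origin, say $|\eta|\ge \rho>0$, the factor $e^{-4\pi^2 s|\eta|^2}\le e^{-4\pi^2 s\rho^2}$ extracts the desired exponential factor $e^{-cs}$ with $c = 4\pi^2\rho^2$; the remaining job is to bound the $L^1$-norm of the inverse transform of $\widetilde\va(\eta)e^{-4\pi^2 s|\eta|^2}e^{4\pi^2 s\rho^2}$ uniformly in $s$.

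The main obstacle is precisely this last uniform bound, since the Gaussian factor degenerates as $s\to\infty$ and one cannot simply discard it. The standard device, which the excerpt alludes to as ``integration by parts,'' is to control $\norm{K_{j,t}}_1$ through weighted $L^\infty$ estimates: one shows that both $\sup_z |K_{j,t}(z)|$ and $\sup_z |z|^{n+1}|K_{j,t}(z)|$ are bounded, by repeatedly integrating by parts in $\eta$ to convert powers of $z$ into derivatives falling on $m_s(\eta)$, then estimating the resulting integrand. The key point to verify is that every $\eta$-derivative of $m_s$ remains uniformly bounded in $s$ after the factor $e^{cs}$ is removed; differentiating the Gaussian produces polynomial factors in $s\eta$, but on the compact annular support these are dominated by the decay of the Gaussian itself, so all derivatives are bounded uniformly in $s$. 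Combining the pointwise bound near the origin with the decay bound $|K_{j,t}(z)|\lesssim |z|^{-(n+1)}$ for large $|z|$ makes $K_{j,t}$ integrable with a uniform constant, completing the estimate and hence the lemma.
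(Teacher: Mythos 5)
Your proof is correct and follows essentially the same route as the paper's: both commute $\De_j$ with the heat semigroup, insert a redundant frequency projection so that Young's inequality reduces the claim to an $L^1$ bound on a frequency-localized heat kernel, and obtain that bound (with the factor $\exp(-ct2^{2j})$ coming from the support of the symbol) by rescaling and repeated integration by parts against $(1+|\lambda x|^2)$. The only cosmetic difference is that the paper uses the identity $\De_j=(\De_{j-1}+\De_j+\De_{j+1})\De_j$ where you insert a fattened annulus projection $\widetilde\De_j$, which is if anything the cleaner choice since it keeps the symbol supported away from the origin.
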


\begin{proof}
Fix $t > 0$.
For $\lambda>0$ we define
\[
K_\lambda(x,t)
= \intRn e^{2\pi ix\cdot\xi} \widehat{\Psi}(\xi/\lambda)
e^{-t|\xi|^2} d\xi.
\]
By a change of variables we have
\[
K_\lambda(x,t) = \lambda^n \intRn e^{2\pi i\lambda x\cdot\xi}
\widehat{\Psi}(\xi) e^{-t\lambda^2 |\xi|^2} d\xi.
\]
Using the identity
\[
(I-\Delta_\xi)e^{2\pi i\lambda x\cdot\xi}
=(1+4\pi^2 |\lambda x|^2)e^{2\pi i\lambda x\cdot\xi}
\]
we can carry out repeated integration by parts to get the pointwise estimate of the localized heat kernel
\[
K_\lambda(x,t)
\lesssim \frac{\lambda^n \exp(-c_1t\lambda^2)}{(1+|\lambda x|^2)^{(n+1)/2}}
\]
for some positive constant $c_1$.
Using a change of variable yields
\begin{equation}
\label{E21}
\int_{{\mathbb R}^n} K_\lambda(x,t) dx \lesssim \exp(-c_1t\lambda^2).
\end{equation}
The implied constant does not depend on $\lambda$.
Since $\De_j = (\De_{j-1} + \De_j + \De_{j+1}) \De_j$ and the convolutions commute, we have
\[
\De_j (\Ga_t * f) = (\De_{j-1} + \De_j + \De_{j+1}) \Ga_t * (\De_j f).
\]
Applying Young's inequality and \eqref{E21} we obtain that
\begin{align*}
\norm{\De_j (\Ga_t * f)}_p
&\le \left(\norm{K_{2^{j-1}}}_1 + \norm{K_{2^{j}}}_1 + \norm{K_{2^{j+1}}}_1\right) \norm{\De_j f}_p \\
&\le \left(\exp(-c_1t2^{2j-2}) + \exp(-c_1t2^{2j}) + \exp(-c_1t2^{2j+2})\right) \norm{\De_j f}_p \\
&\le 3\exp(-ct2^{2j}) \norm{\De_j f}_p
\end{align*}
where $c = c_1/4$.
\end{proof}

\begin{lemm}
\label{L24}
If $1 \le p, q \le \infty$ and $\be \in {\mathbb R}$, then
\[
\norm{\Ga_t * f}_{L^q(0, \infty;  \dot B^{\be}_{pq} (\R))}
+ \norm{\Ga_t^* * f}_{L^q(0, \infty;  \dot B^{\be}_{pq} (\R))}
\lesssim \norm{f}_{\dot B^{\be -\frac2q}_{pq}(\R)}.
\]
\end{lemm}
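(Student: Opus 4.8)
The plan is to reduce the whole inequality to the single-frequency estimate of Lemma~\ref{L23} and then exploit the fact that the time integrability exponent and the frequency summation exponent coincide, both equal to $q$, so that Tonelli's theorem applies with no loss. First I would dispose of the reflected kernel by a change of variables. Writing $y^* = (y',-y_n)$ for the reflection across $\{x_n=0\}$ and $\ti f(y) = f(y^*)$, the substitution $z = y^*$ turns $\Ga_t^* * f$ into $\Ga_t * \ti f$. Since reflection is an orthogonal map and the Littlewood--Paley multiplier $\widehat\Phi$ is radial, $\De_j$ commutes with reflection and the homogeneous Besov norm is reflection invariant, so $\norm{\ti f}_{\dot B^s_{pq}(\R)} = \norm{f}_{\dot B^s_{pq}(\R)}$. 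Hence the second term is controlled by the first applied to $\ti f$, and it suffices to bound $\norm{\Ga_t * f}_{L^q(0,\infty;\dot B^\be_{pq}(\R))}$.

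For $1 \le q < \infty$, I would expand the norm, raise it to the $q$-th power, and observe that the inner $\ell^q$ power over $j$ and the outer $L^q$ power in $t$ combine into a single $q$-th power. By Tonelli's theorem all terms being nonnegative,
\begin{equation*}
\norm{\Ga_t * f}_{L^q(0,\infty;\dot B^\be_{pq})}^q
= \sumjz 2^{\be j q} \intzi \norm{\De_j(\Ga_t * f)}_p^q \, dt.
\end{equation*}
Lemma~\ref{L23} then gives $\norm{\De_j(\Ga_t * f)}_p^q \lesssim \exp(-cqt2^{2j}) \norm{\De_j f}_p^q$, and the explicit Gaussian integral $\intzi \exp(-cqt2^{2j}) \, dt = (cq2^{2j})^{-1}$ produces exactly the gain $2^{-2j}$. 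Collecting the powers of two via $2^{\be j q} 2^{-2j} = 2^{(\be - 2/q) j q}$, the right-hand side is $\lesssim \norm{f}_{\dot B^{\be - 2/q}_{pq}}^q$, which is the claim.

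For the endpoint $q = \infty$, where $\be - 2/q = \be$, I would instead take suprema: since $\exp(-ct2^{2j}) \le 1$ for every $t > 0$, Lemma~\ref{L23} yields $\sup_{t>0} 2^{\be j} \norm{\De_j(\Ga_t * f)}_p \lesssim 2^{\be j} \norm{\De_j f}_p$ uniformly in $j$, and taking the supremum over $j$ closes the estimate.

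I do not anticipate a genuine obstacle here; the single point demanding care is the interchange of the frequency sum with the time integral, which is legitimate precisely because the two exponents agree and every term is nonnegative, so Tonelli applies. Had the Bochner exponent in time differed from the Littlewood--Paley exponent, one would obtain only a Chemin--Lerner type inequality through Minkowski's inequality, but the matched exponents make the estimate both sharp and clean.
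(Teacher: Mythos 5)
Your proof is correct and follows essentially the same route as the paper's: a Tonelli interchange of the frequency sum with the time integral, Lemma \ref{L23}, and the explicit Gaussian time integral producing the gain $2^{-2j}$, with the supremum argument at the endpoint $q=\infty$. Your explicit reduction of $\Ga_t^* * f$ to $\Ga_t * \ti f$ via reflection invariance of the homogeneous Besov norm is a welcome clarification of a step the paper dismisses with ``exactly the same way.''
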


\begin{proof}
Fix $1 \le p \le \infty$.
If $1 \le q < \infty$, then by Lemma \ref{L23}
\begin{align*}
\norm{\Ga_t * f}_{L^q(0, \infty; \dot B^{\be}_{pq} (\R)) }^q
&=\intzi \sumjz 2^{q\be j} \norm{\De_j (\Ga_t * f)}_p^q dt \\
&\lesssim \intzi \sumjz 2^{q\be j} \exp(-ct2^{2j}q) \norm{\De_j f}_p^q dt \\
&= \sumjz 2^{q\be j} \norm{\De_j f}_p^q \intzi \exp(-ct2^{2j}q) dt \\
&= \frac{1}{cq} \sumjz 2^{(q \be-2) j} \norm{\De_j f}_p^q \\
&= \frac{1}{cq} \norm{f}_{\dot B^{\be -\frac2q}_{pq}(\R)}^q.
\end{align*}
Note that the operator norm does not depend on $q$ since $\lim_{q \to \infty} (cq)^{1/q} = 1$.
If $q = \infty$, then by Lemma \ref{L23}
\begin{align*}
\norm{\Ga_t * f}_{L^\infty(0, \infty;  \dot B^{\be}_{p\infty} (\R)) }
&=\sup_{t>0} \supjz 2^{\be j} \norm{\De_j (\Ga_t * f)}_p \\
&\lesssim \supjz \sup_{t>0} 2^{\be j} \exp(-ct2^{2j}) \norm{\De_j f}_p \\
&= \supjz 2^{\be j} \norm{\De_j f}_p \\
&= \norm{f}_{\dot B^{\be}_{p\infty}(\R)}.
\end{align*}
The estimates for $\Ga_t^* * f$ can be done by exactly the same way.
\end{proof}

\begin{lemm}
\label{L25}
Let $1 \leq p, q \le \infty$ and $0 < \al < \infty$.
Then
\[
\norm{\Ga_t * f}_{L^q_{ \al} (0, \infty; L^p (\R) ) }
+ \norm{\Ga_t^* * f}_{L^q_{ \al }(0, \infty; L^p(\R) ) }
\lesssim \norm{f}_{\dot B^{ -2\al-2/q}_{pq}(\R)}.
\]
\end{lemm}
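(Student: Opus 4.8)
The plan is to reduce everything to the frequency-localized decay of Lemma \ref{L23} and then to convert the resulting discrete expression into a convolution on $\Z$. First I would dispose of the reflected kernel. Since the heat kernel is radial we have $\Ga(x-y^*,t)=\Ga(x^*-y,t)$, so $\Ga_t^**f(x)=(\Ga_t*f)(x^*)$, where $x^*=(x',-x_n)$. As $x\mapsto x^*$ is measure preserving, $\norm{\Ga_t^**f}_p=\norm{\Ga_t*f}_p$ for every $t$, so the two terms on the left-hand side carry identical norms and it suffices to treat $\Ga_t*f$.

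Next, writing $f=\sum_{k\in\Z}\De_k f$ and using that $\De_k$ commutes with convolution, Lemma \ref{L23} gives the pointwise-in-time bound $\norm{\Ga_t*f}_p\le\sum_{k\in\Z}e^{-ct2^{2k}}\norm{\De_k f}_p$. I would then split the time axis dyadically into $I_m=[2^{-2(m+1)},2^{-2m})$, on which $t^{\al q}\lesssim 2^{-2m\al q}$ and $e^{-ct2^{2k}}\le e^{-c'2^{2(k-m)}}$ with $c'=c/4$. Substituting $l=k-m$ and integrating over $I_m$ (of length $\lesssim 2^{-2m}$) yields
\[
\intzi t^{\al q}\norm{\Ga_t*f}_p^q\,dt \lesssim \sum_{m\in\Z}2^{-2m(\al q+1)}\Bigg(\sum_{l\in\Z}e^{-c'2^{2l}}\norm{\De_{m+l}f}_p\Bigg)^q.
\]

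The decisive step is to absorb the weight into the sum. Set $\si=\al+\tfrac1q$, so that $\al q+1=\si q$, and put $c_j=2^{-2j\si}\norm{\De_j f}_p$ and $\rho_l=e^{-c'2^{2l}}2^{2l\si}$. Writing $2^{-2m\si}=2^{-2(m+l)\si}2^{2l\si}$ recasts the right-hand side as the $\ell^q(\Z)$ norm of a discrete convolution of $(c_j)$ against (a reflection of) $\rho$. Here lies the only real obstacle: the raw kernel $e^{-c'2^{2l}}$ tends to $1$ as $l\to-\infty$ and is \emph{not} summable, reflecting the fact that the low-frequency part of the heat flow does not decay, so a direct application of Young's inequality fails. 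The weight rescues the estimate precisely because $\si=\al+\tfrac1q>0$: then $\rho_l$ decays geometrically as $l\to-\infty$ and super-exponentially as $l\to+\infty$, whence $\rho\in\ell^1(\Z)$. Young's inequality gives $\big\|\sum_l\rho_l c_{\cdot+l}\big\|_{\ell^q}\le\norm{\rho}_{\ell^1}\norm{c}_{\ell^q}$, and since
\[
\sum_{j\in\Z}c_j^q=\sum_{j\in\Z}2^{-2j\si q}\norm{\De_j f}_p^q=\sum_{j\in\Z}\big(2^{(-2\al-2/q)j}\norm{\De_j f}_p\big)^q=\norm{f}_{\dot B^{-2\al-2/q}_{pq}(\R)}^q,
\]
taking $q$-th roots proves the claim for $1\le q<\infty$.

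For the endpoint $q=\infty$ the same decomposition gives, on $I_m$ with $\si=\al$, the bound $t^{\al}\norm{\Ga_t*f}_p\lesssim\sum_{l\in\Z}\rho_l\,2^{-2(m+l)\al}\norm{\De_{m+l}f}_p$, and estimating the sum by $\norm{\rho}_{\ell^1}\supjz 2^{-2\al j}\norm{\De_j f}_p$ yields the bound with $\norm{f}_{\dot B^{-2\al}_{p\infty}(\R)}$; here the $\ell^1$-summability of $\rho$ again requires $\al>0$, which is exactly the standing hypothesis $0<\al<\infty$. I note in passing that the estimate is one direction of the heat-semigroup characterization of homogeneous Besov spaces of negative order.
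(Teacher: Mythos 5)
Your proof is correct, but it takes a genuinely different route from the paper's. The paper proves only the two endpoint cases $q=\infty$ and $q=1$ directly from Lemma \ref{L23} (using $\sum_j (t2^{2j})^\al e^{-ct2^{2j}}\lesssim 1$ for the first and Fubini plus a change of variables for the second), and then obtains all intermediate $q$ by invoking complex interpolation of the weighted Bochner spaces and of the homogeneous Besov scale (Theorems 5.6.3 and 6.4.5 of Bergh--L\"ofstr\"om). You instead treat every $q$ at once: after the same frequency-localized input from Lemma \ref{L23}, you decompose the time axis into dyadic blocks $I_m$ matched to the frequency scale $2^{-2m}$, recast the resulting double sum as a discrete convolution on $\Z$, and close the estimate with Young's inequality $\ell^1 * \ell^q\subset\ell^q$, the $\ell^1$-summability of the weighted kernel $\rho_l=e^{-c'2^{2l}}2^{2l\si}$ being exactly where the hypothesis $\al>0$ (so $\si=\al+\tfrac1q>0$) enters. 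Your argument is longer on the page but entirely self-contained and elementary, avoids the interpolation machinery (which is slightly delicate for the $L^\infty$-based endpoints), makes the role of $\al>0$ transparent, and disposes of the reflected kernel $\Ga_t^*$ by a one-line symmetry observation rather than by repeating the argument. The paper's route is shorter given the cited interpolation theorems. I find no gap in your reasoning; all the intermediate inequalities (the bound $t^{\al q}\le 2^{-2m\al q}$ on $I_m$, the constant $c'=c/4$, the identification $\norm{c}_{\ell^q}=\norm{f}_{\dot B^{-2\al-2/q}_{pq}(\R)}$, and the separate $q=\infty$ case) check out.
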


\begin{proof}
First we consider the case $q = \infty$.
Using the identity $f = \sumjz \De_j f$ and the triangle inequality and applying Lemma \ref{L23}, we get
\[
t^{\al} \norm{\Ga_t * f}_p
\le t^{\al} \sumjz \norm{\De_j(\Ga_t * f)}_p
\lesssim t^{\al} \sumjz \exp(-ct2^{2j}) \norm{\De_j f}_p.
\]
Since $2^{-2\al j} \norm{\De_j f}_p \le \norm{f}_{\dot B^{-2\al}_{p \infty}(\R)}$ for all $j$, we have
\[
t^{\al} \sumjz \exp(-ct2^{2j}) \norm{\De_j f}_p
\lesssim \norm{f}_{\dot B^{-2\al}_{p \infty}(\R)} \sumjz (t2^{2j})^\al \exp(-ct2^{2j})
\lesssim \norm{f}_{\dot B^{-2\al}_{p \infty}(\R)} .
\]
Thus, we have for all $t$,
\[
t^{\al} \norm{\Ga_t * f}_p
\lesssim \norm{f}_{\dot B^{-2\al}_{p \infty} (\R)}.
\]
This proves the lemma for the case $q=\infty$.

Now we consider the case $q=1$.
By the same way
\begin{align*}
\intzi t^{\al} \norm{\Ga_t * f}_p dt
&\le \intzi t^{\al} \sumjz \norm{\De_j(\Ga_t * f)}_p dt \\
&\lesssim \sumjz \norm{\De_j f}_p \intzi t^{\al} \exp(-ct2^{2j}) dt \\
&\lesssim \sumjz 2^{-2\al j - 2j} \norm{\De_j f}_p \intzi (t2^{2j})^{\al} \exp(-ct2^{2j}) 2^{2j} dt \\
&\lesssim \norm{f}_{\dot B^{ -2\al-2}_{p1}(\R)}
\end{align*}
since the integral $\intzi (t2^{2j})^{\al} \exp(-ct2^{2j}) 2^{2j} dt$ does not depend on $j$ by a change of variable.
This proves the lemma for the case $q=1$.
Using complex interpolation (5.6.3. Theorem in \cite{MR0482275})
\[
[L_\al^\infty(0,\infty; L^p(\R)), L_\al^1(0,\infty; L^p(\R))]_\theta = L_\al^q(0,\infty; L^p(\R))
\]
and (6.4.5. Theorem in \cite{MR0482275})
\[
[\dot B_{p\infty}^{-2\al}(\R),\dot B_{p1}^{-2\al-2}(\R)]_\theta = \dot B_{pq}^{-2\al-2/q}(\R)
\]
with $1/q = 1-\theta$, we get the result.
The estimates for $\Ga_t^* * f$ can be done by exactly the same way.
\end{proof}

Here we recall the following representation of the Helmholtz projection operator.
See Section 3 of  \cite{MR2120798}  for more details.

\begin{defi}[Helmholtz projection $\bP$ in $\Rnp$]
Let $\ \cF=(F_{kl})_{k,l=1}^n$ with
\[
F_{kl}=F_{lk} \qand F_{mk}|_{x_n=0}=0.
\]
If $f=\mbox{div}\cF$, then the Helmholtz projection operator $\bP f$ in $\Rnp$ is defined by
\[
\bP f=\mbox{div} \, \cF',
\]
where ${\mathcal F'}=(F_{km}')_{k,m=1}^n$ is given by
\begin{align*}
F'_{nm} &=F_{nm}-\delta_{nm}F_{nn}, \ m=1,\cdots, n,\\
F'_{\beta \gamma} &=F_{\beta \gamma}-\delta_{\beta \gamma}F_{nn} +
\sum_{q=1}^n D_{x_\gamma} \int_{\Rnp}D_{y_q}N^+(x,y)F_{\beta q}(y) dy \\
&\quad+ D_{x_\gamma} \int_{\Rnp} \Big( D_{y_n}N^+(x,y)F_{n\beta}(y)-D_{y_\beta}N^+(x,y)F_{nn}(y)\Big) dy, \quad \beta,\gamma\neq n,\\
F'_{\beta n} &=-\sum_{\gamma=1}^{n-1}D_{x_\gamma}\int_{\Rnp}D_{x_n}N^+(x,y)F_{\beta \gamma}(y) dy+D_{x_\beta}\int_{R_+}D_{x_n}N^+(x,y) F_{nn}(y)dy\\
&\quad-2F_{\beta n}(x)-2\sum_{\gamma=1}^{n-1}D_{x_\gamma}\int_{\Rnp}D_{x_\gamma}N^{-}(x,y)F_{\beta n}(y) dy \quad \beta\neq n.
\end{align*}
Here we used shorthand notations
\begin{align*}
N^+(x,y) &:= N(x-y)+N(x-y^*), \\
N^{-}(x,y) &:= N(x-y)-N(x-y^*).
\end{align*}
\end{defi}

\begin{rem}
By Lemma \ref{L21} for $1 < p < \infty$
\begin{align}
\label{E22}
\|\cF'\|_{L^p (\Rnp)}& \lesssim \|\cF\|_{L^p (\Rnp)}.
\end{align}
\end{rem}

\begin{lemm}[Hardy-Littilewood-Sobolev inequality \cite{MR1328645}]
\label{L26}
Let $0<\lambda<1$ and define for $t > 0$
\[
I_\lambda f(t) = \int_0^t(t-s)^{-\lambda}f(s) ds.
\]
Then the operator $I_\lambda : L_\al^p(0, \infty) \to L^{q}_{\be}(0, \infty)$ is bounded if $1 < p \leq q < \infty$ and $\be \le \al$ satisfy
\[
0 < \al p + 1 < p, \qquad
0 < \be q + 1 < q, \qquad
1 + \frac1q + \be = \frac1p + \lambda + \al.
\]
\end{lemm}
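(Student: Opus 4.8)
The plan is to reduce this weighted one–sided fractional integral estimate to an ordinary convolution inequality on the line by passing to the multiplicative structure of $(0,\infty)$, and then to invoke Young's inequality (in its Lorentz-space refinement at the endpoint). First I would normalize the two weights. Writing $U(y) = e^{(\al + \frac1p)y} f(e^y)$, the substitution $s = e^y$ gives $\norm{U}_{L^p(\bR)} = \norm{f}_{L^p_\al(0,\infty)}$, and likewise $\norm{I_\la f}_{L^q_\be(0,\infty)} = \norm{V}_{L^q(\bR)}$ with $V(x) = e^{(\be + \frac1q)x} I_\la f(e^x)$. Substituting $t = e^x$, $s = e^y$ and then $w = x-y>0$ into the definition of $I_\la$, and using $(e^x-e^y)^{-\la} = e^{-\la x}(1-e^{y-x})^{-\la}$ together with the Jacobian $ds = e^y\,dy$, I obtain
\[
V(x) = e^{(\be + \frac1q - \la + 1 - \al - \frac1p)x} \intzi (1 - e^{-w})^{-\la}\, e^{-(1 - \al - \frac1p)w}\, U(x - w)\, dw.
\]
The exponent of $e^x$ in front is $\be + \frac1q - \la + 1 - \al - \frac1p$, which vanishes \emph{precisely} because of the scaling hypothesis $1 + \frac1q + \be = \frac1p + \la + \al$. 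Hence the weighted operator becomes a genuine translation-invariant convolution $V = k * U$ on $\bR$ with the one-sided kernel
\[
k(w) = (1 - e^{-w})^{-\la}\, e^{-(1 - \al - \frac1p)w}\, \mathbf{1}_{\{w > 0\}}.
\]

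Next I would read off the exponent $r$ from the Young inequality $\norm{k * U}_{L^q} \le \norm{k}_{L^r}\norm{U}_{L^p}$, namely $\frac1r = 1 + \frac1q - \frac1p$; since $1 < p \le q < \infty$ one has $1 \le r < \infty$. The only point to check is that $k \in L^r(\bR)$. Its two endpoint behaviours are $k(w) \asymp w^{-\la}$ as $w \to 0^+$ and $k(w) \asymp e^{-(1-\al-\frac1p)w}$ as $w \to \infty$. The exponential tail is integrable exactly when $1 - \al - \frac1p > 0$, i.e. $\al p + 1 < p$, which is one of the hypotheses; the singularity at the origin is $L^r$-integrable when $\la r < 1$. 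From the scaling relation one computes $\la = \frac1r + (\be - \al)$, so $\la r = 1 + (\be - \al)r$, whence $\la r < 1$ exactly when $\be < \al$. In that case $k \in L^r(\bR)$ and Young's inequality (with the usual $L^1 * L^p \to L^p$ argument when $p = q$, so $r = 1$) closes the estimate.

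The main obstacle is the endpoint $\be = \al$. There $\la r = 1$, so the singularity $k(w) \asymp w^{-1/r}$ at the origin is no longer in $L^r$ but only in weak $L^r$, i.e. $k \in L^{r,\infty}(\bR)$ (the exponential tail keeps $k$ controlled at infinity, and $0<\la<1$ is exactly what places $w^{-\la}$ in $L^{1/\la,\infty}$ near $0$). Note that $\be = \al$ forces $p < q$: otherwise $r = 1$ and $\la = 1$, contradicting $\la < 1$. Thus $1 < r < \infty$ strictly, and I would replace Young's inequality by its Lorentz-space version (O'Neil's generalized Young inequality): for $1 < p < q < \infty$ and $\frac1r = 1 + \frac1q - \frac1p \in (0,1)$ one has $\norm{k * U}_{L^q} \lesssim \norm{k}_{L^{r,\infty}}\norm{U}_{L^p}$, which yields the claim.

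Finally I would dispose of the remaining conditions. The upper bound $\be q + 1 < q$ (equivalently $\be < 1/q'$) is automatic, since $\be \le \al < 1/p' \le 1/q'$ using $p \le q$. The lower bounds $0 < \al p + 1$ and $0 < \be q + 1$ are the integrability conditions at the opposite end, associated with the formally adjoint operator $I_\la^* g(s) = \int_s^\infty (t-s)^{-\la} g(t)\,dt$; they enter the dual/symmetric formulation but are not needed for the forward estimate carried out above (for $f \in L^p_\al$ the integral defining $I_\la f$ already converges because $\al < 1/p'$). The essential hypotheses driving the argument are therefore $\al p + 1 < p$, $\be \le \al$, the scaling identity, and $1 < p \le q < \infty$, with the single delicate case being the endpoint $\be = \al$ handled by the weak-type convolution inequality.
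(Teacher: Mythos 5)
Your proof is correct, but note that the paper itself does not prove Lemma \ref{L26}: it is quoted as a black box from Triebel's interpolation monograph, so the comparison is between your self-contained argument and a bare citation. Your route --- the substitution $s=e^{y}$, $t=e^{x}$, which turns the power weights into the exponential factors $U(y)=e^{(\al+1/p)y}f(e^{y})$, $V(x)=e^{(\be+1/q)x}I_\la f(e^{x})$ and the kernel $(t-s)^{-\la}$ into a genuine convolution $V=k*U$ on $\bR$ with $k(w)=(1-e^{-w})^{-\la}e^{-(1-\al-1/p)w}\mathbf{1}_{\{w>0\}}$ --- is the classical Hardy--Littlewood--P\'olya reduction for homogeneous kernels, and your bookkeeping checks out: the prefactor exponent $\be+\tfrac1q+1-\la-\al-\tfrac1p$ vanishes exactly by the scaling identity; the tail of $k$ lies in $L^{r}$ iff $\al p+1<p$; the origin singularity $w^{-\la}$ lies in $L^{r}$ iff $\la r<1$, which via $\la=\tfrac1r+(\be-\al)$ is precisely $\be<\al$; and at the endpoint $\be=\al$ you correctly observe that $\la<1$ forces $p<q$, hence $1<r<\infty$, so the weak-type Young (O'Neil) inequality $\norm{k*U}_{L^q}\lesssim\norm{k}_{L^{r,\infty}}\norm{U}_{L^p}$ applies. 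Your remark that the lower bounds $\al p+1>0$ and $\be q+1>0$ are not used is also accurate for the stated mapping direction (the substitution is an exact isometry regardless of the sign of the weights, and a.e.\ finiteness of $I_\la|f|$ follows from the norm bound itself); those conditions belong to the symmetric/dual formulation in which Triebel states the result. What your approach buys is transparency --- it isolates exactly which hypotheses are active and shows the only delicate case is $\be=\al$ --- at the cost of invoking the Lorentz-space refinement of Young's inequality, which is itself usually proved by Marcinkiewicz interpolation.
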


%

\section{ Linearized Navier-Stokes equations}
\label{S3}
\setcounter{equation}{0}

In this section, we consider the initial-boundary value problem of the stochastic linearized Navier--Stokes equations in half space
\begin{equation}
\label{E31}
d u(t,x) = \big(\De u(t,x) -\na p(t,x) + {\rm div} \, \cF(t,x)  \big) dt + g(t,x) d B_t
\end{equation}
in $\Om \times (0,\infty) \times \Rnp$ with the boundary condition $u|_{x_n =0} = 0$ and the initial condition $u|_{t =0} = u_0$.
The goal of this section is to prove the following proposition.

\begin{prop}
\label{P31}
Let $2 \le p < \infty$, $2 < q < \infty$, and $0 \leq 2\al < 1-\frac{1}{p}-\frac{2}{q}$.
Assume $u_0 \in \dot {\mathbb B}^{-2\al-\frac{2}{q}}_{pq0}(\Rnp)$ with $\divg u_0 = 0$, $\cF \in \cL ^{q_1}_{\al_1}(\Om \times (0,\infty),\cP; L^{p_1}(\Rnp))$ for some $p_1$, $q_1$, and $\al_1$ satisfying
\begin{equation}
\label{E32}
q_1 \le q, \quad \al \le \al_1 < 1-\frac{1}{q_1}, \quad
\bigg(\frac{n}{p_1}-\frac{n}{p}\bigg) + \bigg(\frac{2}{q_1}-\frac{2}{q}\bigg) + 2(\al_1-\al) = 1,
\end{equation}
and $g \in  \cL _{\al_2}^{q}( \Om\times (0,\infty), \cP; L^{p_2} (\Rnp))$ for some $p_2$ and $\al_2$ satisfying
\[
p_2 \leq p, \quad \al \leq \al_2, \quad \bigg(\frac{n}{p_2}-\frac{n}{p}\bigg) + 2(\al_2-\al) = 1.
\]
Then there is at least one weak solution $u\in \cL ^q_\al(\Om \times (0,\infty),\cP;L^p(\Rnp))$ to the Stokes equations \eqref{P31} satisfying
\begin{equation}
\label{E33}
\norm{u}_{\cL ^q_\al(\Om \times (0,\infty),\cP;L^p(\Rnp))}
\le C_1 \Big(\|u_0\|_{\dot {\mathbb B}^{-2\al-\frac{2}{q}}_{pq0}(\Rnp)}
+ \norm{g}_{\cL _{\al_2}^{q}( \Om\times (0,\infty), \cP; L^{p_2} (\Rnp))}
+ \norm{\cF}_{\cL ^{q_1}_{\al_1}(\Om \times (0,\infty),\cP; L^{p_1}(\Rnp))}\Big).
\end{equation}
Moreover, the solution $u$ is unique in $\cL ^q_\al(\Om \times (0,\infty),\cP;L^p(\Rnp))$ when $0 \le \al < 1-\frac1q$.
\end{prop}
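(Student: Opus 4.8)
The plan is to remove the pressure by the Helmholtz projection $\bP$ and to solve the resulting equation through the Duhamel formula, establishing the a priori bound \eqref{E33} first and then checking that the constructed mild solution is a weak solution in the sense required. Applying $\bP$ to \eqref{E31} eliminates $\na p$, and by the explicit representation of $\bP$ together with the $L^p$-bound \eqref{E22} and Lemma \ref{L22} the Newtonian-potential corrections produced by the projection are harmless. Writing $\Ga^{\mathrm{St}}$ for the corresponding half-space Stokes kernel, assembled from the heat kernels $\Ga_t$, $\Ga_t^*$ and the Newtonian potential, the mild solution reads
\[
u(t) = \Ga^{\mathrm{St}}_t * u_0 + \int_0^t \Ga^{\mathrm{St}}_{t-s} \bP \divg \cF(s)\, ds + \int_0^t \Ga^{\mathrm{St}}_{t-s} \bP g(s)\, dB_s ,
\]
and the proof reduces to estimating these three contributions in $\cL^q_\al(\Om\times(0,\infty),\cP; L^p(\Rnp))$.

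For the initial-data term I would apply Lemma \ref{L25}, which gives $\norm{\Ga_t * u_0}_{L^q_\al(0,\infty;L^p)} \lesssim \norm{u_0}_{\dot B^{-2\al-2/q}_{pq}}$; the reflected kernel $\Ga_t^*$ and the Newtonian corrections are handled identically via Lemma \ref{L22}, and taking expectations produces the $\dot{\mathbb B}^{-2\al-2/q}_{pq0}$-norm on the right of \eqref{E33}. For the deterministic forcing I would use that the heat semigroup applied to a first-order derivative acts like a $t^{-1/2}$-order smoothing in the $L^{p_1}\to L^p$ scale; after pulling out the spatial norm the remaining time convolution is exactly the operator $I_\la$ of Lemma \ref{L26}. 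The exponent identity in \eqref{E32}, namely $(n/p_1-n/p)+(2/q_1-2/q)+2(\al_1-\al)=1$, is precisely the Hardy--Littlewood--Sobolev balance that makes $I_\la \colon L^{q_1}_{\al_1}\to L^q_\al$ bounded, while the constraints $\al\le\al_1<1-1/q_1$ guarantee the admissibility conditions in Lemma \ref{L26}; taking expectations gives the $\cF$-contribution.

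The main obstacle is the stochastic convolution $\int_0^t \Ga^{\mathrm{St}}_{t-s}\bP g(s)\,dB_s$. Since $p,q\ge 2$, I would invoke the Burkholder--Davis--Gundy inequality in the martingale-type-$2$ space $L^p(\Rnp)$ to dominate its $q$-th moment by that of the square function $\big(\int_0^t \norm{\Ga^{\mathrm{St}}_{t-s}\bP g(s)}_{L^p}^2\,ds\big)^{1/2}$; combining the heat-kernel decay of Lemma \ref{L23} with the weighted time integral and a further Hardy--Littlewood--Sobolev estimate, now tuned to the balance $(n/p_2-n/p)+2(\al_2-\al)=1$, yields the $g$-contribution to \eqref{E33}. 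The delicate point is the interaction between the spatial $L^p$-norm, the stochastic integration, and the weighted $L^q$-norm in time, and the hypotheses $p\ge 2$, $q>2$ are exactly what the martingale inequality and the ensuing integrability demand. With $u\in\cL^q_\al$ established, a density argument against divergence-free test functions shows that it satisfies the weak formulation. Finally, for uniqueness in the range $0\le\al<1-1/q$, the difference $w$ of two solutions solves the homogeneous problem with vanishing $u_0$, $\cF$ and $g$; applying the linear estimate \eqref{E33} to $w$ forces $\norm{w}_{\cL^q_\al}=0$, so $w=0$.
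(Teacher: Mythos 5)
Your proposal is correct in outline and, for the genuinely new part of the statement --- the stochastic convolution --- it follows the same route as the paper: Burkholder--Davis--Gundy in $L^p(\Rnp)$ with $p,q\ge 2$, Minkowski's integral inequality, reduction of the Solonnikov kernel to the heat kernels $\Ga_{t-s}$ and $\Ga_{t-s}^*$, Young's inequality in space, and the Hardy--Littlewood--Sobolev estimate of Lemma \ref{L26} in time tuned to $(n/p_2-n/p)+2(\al_2-\al)=1$. Where you diverge is in the deterministic contributions: the paper splits $u=u_1+u_2$ and disposes of the $u_0$- and $\cF$-terms in one stroke by citing the known solvability result for the deterministic half-space Stokes system (Lemma \ref{L32}, i.e.\ Theorem 1.2 of \cite{MR3474350} and Theorem 1.3 of \cite{CJ}), proving in detail only the estimate for $u_2$; you instead propose a unified Duhamel/mild-solution derivation of all three terms from Lemmas \ref{L22}, \ref{L25} and \ref{L26}. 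Your route is self-contained and makes the exponent bookkeeping in \eqref{E32} transparent (it is exactly the Hardy--Littlewood--Sobolev balance), but be aware that the step you describe as ``the Newtonian-potential corrections produced by the projection are harmless'' is precisely where the cited deterministic theory does its real work: the half-space Stokes kernel applied to $\bP\divg\cF$ contains non-local boundary terms whose $L^{p_1}\to L^p$ smoothing rate must be verified separately, and this is not an immediate consequence of Lemma \ref{L22}. Your uniqueness argument (apply the linear estimate to the difference of two solutions of the homogeneous problem) is the same as the paper's, which inherits it from the uniqueness clause of Lemma \ref{L32} together with the representation of $u_2$ as a stochastic integral.
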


Before proving the proposition we recall the following solvability lemma for the initial-boundary value problem of the deterministic linearized Navier--Stokes equations in half space.
For the proof we refer the reader to Theorem 1.2 in \cite{MR3474350} for the case $q < \infty$ and to Theorem 1.3 in \cite{CJ} for the case $q =\infty$.

\begin{lemm}
\label{L32}
Let $1 < p < \infty$, $2 < q \leq \infty$, and $0 \leq 2\al < 1-\frac{1}{p}-\frac{2}{q}$.
Assume $u_0 \in \dot B^{-2\al-\frac{2}{q}}_{pq0}(\Rnp)$ with $\divg u_0 = 0$, $\cF \in L^{q_1}_{\al_1}(0,\infty; L^{p_1}(\Rnp))$ for some $p_1$, $q_1$, and $\al_1$ satisfying \eqref{E32}.
Then there is at least one weak solution $u_1\in L^q_\al(0,\infty;L^p(\Rnp))$ to the Stokes equations
\begin{equation}
\label{E34}
\p_t u_1 = \De u_1 - \na p_1 -\mbox{div} \cF \qand \divg u_1 = 0
\end{equation}
in $(0,\infty) \times \Rnp$ with the boundary condition $u_1|_{x_n =0} = 0$ and the initial condition $u_1|_{t =0} = u_0$.
Moreover, the solution satisfies the estimate
\begin{equation}
\label{E35}
\norm{u_1}_{L^q_\al(0,\infty;L^p(\Rnp))}
\lesssim \|u_0\|_{B^{-2\al-\frac{2}{q}}_{pq0}(\Rnp)}+\|\cF\|_{L^{q_1}_{\al_1}(0,\infty; L^{p_1}(\Rnp))}.
\end{equation}
Moreover, the solution $u_1$ is unique in $L^q_\al(0,\infty;L^p(\Rnp))$ when $0 \le \al < 1-\frac1q$.
\end{lemm}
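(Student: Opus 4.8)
The plan is to produce an explicit representation of the half-space Stokes flow that expresses $u_1$ through the heat operators $\Ga_t*$ and $\Ga_t^* *$ composed with the Helmholtz-type operators controlled in Section \ref{S2}, and then to read off the estimate \eqref{E35} from Lemmas \ref{L22}, \ref{L25}, and \ref{L26}. By linearity I would write $u_1 = v + w$, where $v$ solves \eqref{E34} with $\cF = 0$ and initial datum $u_0$, and $w$ solves \eqref{E34} with datum $0$ and force $-\divg\cF$; I treat the two contributions separately and invoke the Helmholtz projection $\bP$ of the paper to eliminate the pressure (the weak formulation only pairs against divergence-free test fields, so $p_1$ never appears explicitly).

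For the initial-datum part $v$ I would apply the half-space Stokes semigroup to $u_0$ and expand it as a finite sum of terms $\Ga_t * (Tu_0)$ and $\Ga_t^* * (Tu_0)$, where $T$ ranges over the identity and the second-order Newtonian-potential operators (with the reflection $y\mapsto y^*$) occurring in the definition of $\bP$; the reflected terms are precisely what enforce $v|_{x_n=0}=0$ together with $\divg v=0$. Each summand is estimated by Lemma \ref{L25},
\[
\norm{\Ga_t * (Tu_0)}_{L^q_\al(0,\infty;L^p(\Rnp))}\lesssim\norm{Tu_0}_{\dot B^{-s}_{pq}(\Rnp)},\qquad s:=2\al+\tfrac2q,
\]
and the hypothesis $0\le2\al<1-\tfrac1p-\tfrac2q$ places $s$ in $(0,1/p')$, so that Lemma \ref{L22} gives $\norm{Tu_0}_{\dot B^{-s}_{pq}}\lesssim\norm{u_0}_{\dot B^{-s}_{pq}}=\norm{u_0}_{\dot B^{-2\al-\frac2q}_{pq0}}$. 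The borderline $\al=0$, which is excluded from Lemma \ref{L25}, I would recover from the unweighted estimate $\norm{\Ga_t * f}_{L^q(0,\infty;L^p)}\lesssim\norm{f}_{\dot B^{-2/q}_{pq}}$, proved by a dyadic decomposition in $t$ followed by a discrete Hardy inequality.

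For the force part $w$ I would use the Duhamel representation $w(t)=-\int_0^t e^{-(t-s)A}\,\bP\,\divg\cF(s)\,ds$, with $A$ the Stokes operator, and first rewrite $\bP\,\divg\cF=\divg\cF'$ with $\norm{\cF'(s)}_{L^{p_1}}\lesssim\norm{\cF(s)}_{L^{p_1}}$ by \eqref{E22}. Expanding $e^{-(t-s)A}\divg$ into heat and reflected-heat kernels carrying one spatial derivative and using the $L^{p_1}\to L^p$ smoothing of $\Ga_{t-s}$ produces the kernel bound
\[
\norm{w(t)}_{L^p(\Rnp)}\lesssim\int_0^t(t-s)^{-\la}\norm{\cF(s)}_{L^{p_1}(\Rnp)}\,ds,\qquad \la=\tfrac12+\tfrac{n}{2}\Big(\tfrac1{p_1}-\tfrac1p\Big).
\]
A short computation shows that the scaling identity in \eqref{E32} is exactly the balance condition $1+\tfrac1q+\al=\tfrac1{q_1}+\la+\al_1$ of Lemma \ref{L26}, while $q_1\le q$ and $\al\le\al_1<1-\tfrac1{q_1}$ supply its remaining hypotheses; hence Lemma \ref{L26} gives $\norm{w}_{L^q_\al(0,\infty;L^p(\Rnp))}\lesssim\norm{\cF}_{L^{q_1}_{\al_1}(0,\infty;L^{p_1}(\Rnp))}$, and adding the two bounds yields \eqref{E35}.

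I expect the main obstacle to be the construction and justification of the boundary correctors in the representation: unlike in the whole space, $\bP$ is nonlocal and the heat flow of the extended data does not preserve the Dirichlet condition, so one must verify that the reflected-heat and Newtonian-potential terms simultaneously restore $v|_{x_n=0}=0$ and divergence-freeness while remaining bounded on the $L^p$ and Besov scales used above, and that the resulting field is a genuine weak solution of \eqref{E34}. For uniqueness in the range $0\le\al<1-\tfrac1q$ I would argue by duality: the difference of two solutions solves \eqref{E34} with zero data, and pairing it against smooth divergence-free solutions of the backward Stokes system (with compactly supported right-hand side and vanishing terminal value) forces it to vanish, the restriction $\al<1-\tfrac1q$ being exactly what makes the space-time pairing integrable near $t=0$. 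Finally, the endpoint $q=\infty$ is treated in parallel, replacing the $\ell^q$ sums in time by suprema as in Lemmas \ref{L24} and \ref{L25}, since Lemma \ref{L26} itself requires $q<\infty$.
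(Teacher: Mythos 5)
First, a point of comparison you could not have known: the paper does not prove Lemma \ref{L32} at all. It is imported as a black box, with the proof attributed to Theorem 1.2 of \cite{MR3474350} for $q<\infty$ and Theorem 1.3 of \cite{CJ} for $q=\infty$. So your attempt is not competing with an internal argument but with those two references, and their actual content is precisely the part you defer: the construction of the explicit solution formula for the half-space Stokes problem (the kernel $K$ of \eqref{E37} with its heat, reflected-heat, and Newtonian-potential pieces), the verification that this formula yields a weak solution with $u_1|_{x_n=0}=0$, $\divg u_1=0$, $u_1|_{t=0}=u_0$, and the accompanying kernel estimates. Your outline of the strategy is sound and in fact mirrors what the paper itself does in Section \ref{S5} (Lemmas \ref{L51}--\ref{L53}) for the Besov-regularity analogue, but acknowledging ``the main obstacle'' is not the same as overcoming it: as written, the proposal is a plan whose central step --- the only genuinely hard one in the half space --- is missing.

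Second, even granting the representation formula and the smoothing bound, your reduction of the force estimate to Lemma \ref{L26} does not cover the full parameter range of \eqref{E32}. The conditions \eqref{E32} allow $q_1=q$ and $\al_1=\al$ (note $\al<1-\frac1q$ holds automatically here), in which case the scaling identity forces $\frac{n}{p_1}-\frac{n}{p}=1$ and hence $\la=\frac12+\frac{n}{2}\big(\frac{1}{p_1}-\frac{1}{p}\big)=1$; Lemma \ref{L26} requires $0<\la<1$, so Hardy--Littlewood--Sobolev gives nothing in this admissible borderline case --- the operator $\int_0^t(t-s)^{-1}f(s)\,ds$ is a genuine singular integral in time and needs a Calder\'on--Zygmund or maximal-regularity type argument instead. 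Symmetrically, \eqref{E32} also permits $\big(\frac{2}{q_1}-\frac{2}{q}\big)+2(\al_1-\al)>1$ (for instance $q_1=2$, $q=\infty$, $\al_1>\al$), which makes $p_1>p$, and then your Young-inequality step $\norm{\Ga_{t-s}*f}_{L^p}\lesssim (t-s)^{-\frac{n}{2}(\frac{1}{p_1}-\frac{1}{p})}\norm{f}_{L^{p_1}}$ is unavailable: one cannot smooth from $L^{p_1}$ into $L^p$ with $p<p_1$ by convolution on a set of infinite measure. Finally, the endpoint $q=\infty$, which the lemma explicitly includes and which Theorem \ref{T1} needs, is only gestured at; since Lemma \ref{L26} is stated for $q<\infty$, ``treated in parallel'' is not an argument there --- one must run the direct H\"older-in-time computation and check the resulting integrability conditions. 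These are the concrete holes that separate your proposal from a proof, and they are, in essence, the reason the authors chose to cite \cite{MR3474350} and \cite{CJ} rather than reprove the lemma.
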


\begin{proof}[Proof of Proposition \ref{P31}]
For each fixed $\omega \in \Omega$, we solve the Stokes equation \eqref{E34} with the initial data $u_0(\omega)$ to obtain the unique solution $u_1$ satisfying the estimate \eqref{E35}.
If we solve the following stochastic Stokes problem
\begin{equation}
\label{E36}
d u_2(t,x) = \big(\De u_2(t,x) -\na p_2(t,x)  \big) dt + g(t,x) d B_t
\end{equation}
in $\Omega \times (0,\infty) \times \Rnp$ with the boundary condition $u_2|_{x_n =0} = 0$ and the initial condition $u_2|_{t =0} = 0$, then we can take $u = u_1 + u_2$.
We note that the solution $u_2$ can be represented by
\[
u_2(x,t) = \int_0^t T_Kg(x,t,s) dB_s,
\]
where
\[
T_Kg(x,t,s) = \int_{\Rnp} K(x,y, t-s) g(y,s) dy
\]
and the kernel $K = (K_{ij})$ is given by
\begin{equation}
\label{E37}
\begin{split}
K_{ij}(x,y,t)
&= \delta_{ij}\Big(\Gamma(x-y,t)-\Gamma(x-y^*,t)\Big) \\
&\quad + 4(1-\delta_{jn}) D_{x_j} \int^{x_n}_0 \int_{\bR^{n-1}}D_{x_i}N(x-z)\Gamma(z-y^*,t)dz
\end{split}
\end{equation}
for $i,j=1,\cdots, n$.
Here $y^*=(y',-y_n)$.
(See \cite{MR0460931} for details.)
We have
\begin{align*}
\E \intzi  t^{\al q} \norm{u_2}_{L^p (\Rnp)}^q  dt
&= \E \intzi  t^{\al q} \left(\int_{\Rnp} \abs{\int_0^t T_Kg(x,t,s) dB_s}^p dx \right)^{\frac{q}p} dt \\
&\lesssim \E \intzi  t^{\al q} \left(\int_{\Rnp} \abs{\int_0^t \abs{T_Kg(x,t,s)}^2 ds}^\frac{p}2 dx \right)^{\frac{q}p} dt \\
&\leq \E \intzi  t^{\al q} \left(\int_0^t \abs{\int_{\Rnp} \abs{T_Kg(x,t,s)}^p dx}^\frac2p ds\right)^\frac{q}2 dt
\end{align*}
by the Burkholder--Davis--Gundy inequality (Section 2.7 in \cite{MR1661766}) and Minkowski's integral inequality.
We now recall the following estimate
\begin{align*}
\int_{\Rnp} \abs{T_Kg(x,t,s)}^p dx
&\lesssim \int_{\Rnp} \abs{\int_{\Rnp} \Ga_{t-s} (x -y) f(y,s) dy}^p dx \\
&\quad+ \int_{\Rnp} \abs{\int_{\Rnp} \Ga_{t-s} (x -y^*) f(y,s) dy}^p dx
\end{align*}
in Section 3.1 in \cite{MR3780493}.
Then we obtain that, by Young's inequality and Lemma \ref{L26},
\begin{align*}
\E \intzi  t^{\al q} \norm{u_2}_{L^p(\Rnp)}^q dt
&\leq \E \intzi  t^{\al q} \left(\int_0^t \abs{\int_{\Rnp} \abs{\int_{\Rnp} \Ga_{t-s}(x-y) g(y,s) dy}^p dx}^\frac2p ds\right)^\frac{q}2 dt \\
&\quad + \E \intzi  t^{\al q} \left(\int_0^t \abs{\int_{\Rnp} \abs{\int_{\Rnp} \Ga_{t-s}(x-y^*) g(y,s) dy}^p dx}^\frac2p ds\right)^\frac{q}2 dt \\
&\leq \E \intzi  t^{\al q} \left(\int_0^t (t -s)^{-\frac{n}{p_2} + \frac{n}p} \norm{g (s)}_{L^{p_2}(\Rnp)}^2  ds\right)^\frac{q}2 dt \\
&\le \E \intzi  t^{\al_2 q} \norm{g}_{L^{p_2}(\Rnp)}^q dt,
\end{align*}
where $\bigg(\frac{n}{p_2}-\frac{n}{p}\bigg) + 2(\al_2-\al) = 1$.
Combining this together with Lemma \ref{L32}, we obtain that
\[
\norm{u}_{\cL ^q_\al(\Om \times (0,\infty),\cP;L^p(\Rnp))}
\lesssim \norm{g}_{\cL _{\al_2}^{q}( \Om\times (0,\infty), \cP; L^{p_2} (\Rnp))}.
\]
Summing this and \eqref{E35} we get \eqref{E33}.
\end{proof}

\section{Proof of Theorem \ref{T1}}
\label{S4}
\setcounter{equation}{0}

We divide the proof into a few steps.

\begin{enumerate}[\bf{Step} 1)]
\item
Define the Banach space $\cX$ and its closed subset $\cC_R$ as
\[
\cX := \cL^q_\al (\Om \times (0,\infty), \cP; L^p(\Rnp)) \qand \cC_R := \set{u \in \cX : \norm{u}_{\cX} \le R},
\]
where the number $0 <R \le 1$ will be determined in later.
Fix $0 \leq \al < \frac1{q'}$ and an initial data $u_0 \in \dot {\mathbb B}^{-2\al-\frac{2}{q}}_{pq0}(\Rnp)$ with $\divg u_0 = 0$.
Then for each $v \in \cC_R$ there exists a unique solution $w(\om)$, for every $\om \in \Om$, of the following problem
\begin{equation}
\label{E40}
d w = \big( \De w  + \na P_v - {\rm div } \, ( \chi_v v \otimes v )\big)dt + g dB_t \qand \divg w = 0
\end{equation}
with the boundary condition $w|_{x_n =0} = 0$ and the initial condition $w|_{t =0} = u_0$ by Proposition \ref{P31}.
Thus, we can define the solution map by
\[
S(v)=w.
\]
We introduced the function $\chi_{v(\om)} : [0,\infty) \to [0,1]$ defined by
\[
\chi_{v(\om)}(t) := \te (\norm{v(\om)}_{L^q_\al (0,t; L^p (\Rnp))}),
\]
where $\te : [0, \infty) \to [0, 1]$ is a continuous decreasing function defined by
\[
\te (s) = \set{0 \vee (2-R^{-1}s)} \wedge 1.
\]
\item
Choose $q_1=\frac{q}{2}$, $p_1=\frac{p}{2}$, $\al_1=2\al$ so that $\bigg(\frac{n}{p_1}-\frac{n}{p}\bigg) + \bigg(\frac{2}{q_1}-\frac{2}{q}\bigg) + 2(\al_1-\al) = 1$.
Hence $-2\al-\frac2q=-1+\frac{n }{p}$.
For every $\om \in \Om$, we get by H\"older's inequality
\[
\norm{\chi_v v \otimes v}_{L^{q_1}_{\al_1}(0,\infty;L^{p_1}(\Rnp))}
\le \norm{\sqrt{\chi_v} v}_{L^q_\al(0,\infty;L^p(\Rnp))}^2
\le 4R^2.
\]
If $v \in \cC_R$, then by using Proposition \ref{P31} and assumption of Theorem \ref{T1}
\begin{align*}
\norm{S(v)}_{\cX}
&= \norm{w}_{\cX} 
\le C_1(\|u_0\|_{\dot {\mathbb B}^{-2\al-\frac{2}{q}}_{pq0}(\Rnp)}
+ \norm{g}_{\cL _{\al_2}^{q}( \Om\times (0,\infty), \cP; L^{p_2} (\Rnp))} + 4R^2) \\
&\le C_1(\delta + 4R^2).
\end{align*}
Now, we can take $\de = R^2$ so that
\begin{equation}
\label{E41}
\norm{S(v)}_{\cX} \le 5 C_1 R^{2}.
\end{equation}
\item
We claim that
\begin{equation}
\label{E42}
|\chi_u - \chi_v| \le R^{-1} \norm{u-v}_{L^q_\al (0,t; L^p (\Rnp))}.
\end{equation}
Note first that $\chi_{v(\om)} (t) = 1$ if $\norm{v(\om)}_{L^q_\al (0,t; L^p (\Rnp))} \le R$ and that $\chi_{v(\om)} (t) = 0$ if $\norm{v(\om)}_{L^q_\al (0,t; L^p (\Rnp))} \ge 2R$.
To prove the claim, we may consider the following six cases.
\begin{enumerate}
\item
If $\norm{u}_{L^q_\al (0,t; L^p (\Rnp))} \le \norm{v}_{L^q_\al (0,t; L^p (\Rnp))} \le R$, then $|\chi_u - \chi_v| = |1-1| = 0$.
\item
If $\norm{u}_{L^q_\al (0,t; L^p (\Rnp))} \le R \le \norm{v}_{L^q_\al (0,t; L^p (\Rnp))} \le 2R$, then
\begin{align*}
|\chi_u - \chi_v|
&= |1-(2-R^{-1}\norm{v}_{L^q_\al (0,t; L^p (\Rnp))})| \\
&= R^{-1}|\norm{v}_{L^q_\al (0,t; L^p (\Rnp))}-R| \\
&\le R^{-1}|\norm{v}_{L^q_\al (0,t; L^p (\Rnp))}-\norm{u}_{L^q_\al (0,t; L^p (\Rnp))}| \\
&\le R^{-1} \norm{u-v}_{L^q_\al (0,t; L^p (\Rnp))}.
\end{align*}
\item
If $\norm{u}_{L^q_\al (0,t; L^p (\Rnp))} \le R \le 2R \le \norm{v}_{L^q_\al (0,t; L^p (\Rnp))}$, then
\begin{align*}
|\chi_u - \chi_v|
&= |1-0| \le \frac{\norm{v}_{L^q_\al (0,t; L^p (\Rnp))}-\norm{u}_{L^q_\al (0,t; L^p (\Rnp))}}{2R-R} \\
&\le R^{-1} \norm{u-v}_{L^q_\al (0,t; L^p (\Rnp))}.
\end{align*}
\item
If $R \le \norm{u}_{L^q_\al (0,t; L^p (\Rnp))} \le \norm{v}_{L^q_\al (0,t; L^p (\Rnp))} \le 2R$, then
\begin{align*}
|\chi_u - \chi_v|
&= |(2-R^{-1}\norm{u}_{L^q_\al (0,t; L^p (\Rnp))})-(2-R^{-1}\norm{v}_{L^q_\al (0,t; L^p (\Rnp))})| \\
&= R^{-1}|\norm{v}_{L^q_\al (0,t; L^p (\Rnp))}-\norm{u}_{L^q_\al (0,t; L^p (\Rnp))}| \\
&\le R^{-1} \norm{u-v}_{L^q_\al (0,t; L^p (\Rnp))}.
\end{align*}
\item
If $R \le \norm{u}_{L^q_\al (0,t; L^p (\Rnp))} \le 2R \le \norm{v}_{L^q_\al (0,t; L^p (\Rnp))}$, then
\begin{align*}
|\chi_u - \chi_v|
&= |(2-R^{-1}\norm{u}_{L^q_\al (0,t; L^p (\Rnp))})-0| \\
&= R^{-1}|2R-\norm{u}_{L^q_\al (0,t; L^p (\Rnp))}| \\
&\le R^{-1}|\norm{v}_{L^q_\al (0,t; L^p (\Rnp))}-\norm{u}_{L^q_\al (0,t; L^p (\Rnp))}| \\
&\le R^{-1} \norm{u-v}_{L^q_\al (0,t; L^p (\Rnp))}.
\end{align*}
\item
If $2R \le \norm{u}_{L^q_\al (0,t; L^p (\Rnp))} \le \norm{v}_{L^q_\al (0,t; L^p (\Rnp))}$, then $|\chi_u - \chi_v| = |0-0| = 0$.
\end{enumerate}
Thus, \eqref{E42} always holds true.
\item
We claim that there exists $C_2>0$ such that
\begin{equation}
\label{E43}
\norm{S(u)-S(v)}_{\cX} \le C_2 R \norm{u-v}_{\cX}.
\end{equation}
Note first that
\[
|\chi_u u \otimes u - \chi_v v \otimes v|
\le |(\chi_u - \chi_v) (u \otimes u)| + |\chi_v (u-v) \otimes u| + |\chi_v v \otimes (u-v)|.
\]
To prove the claim, we may consider the following two cases.
\begin{enumerate}
\item
If $\chi_u(t)>0$ and $\chi_v(t)>0$, then by \eqref{E42}
\begin{align*}
\norm{(\chi_u - \chi_v) (u \otimes u)}_{L^{\frac{q}{2}}_{2\al}(0,t; L^{\frac{p}{2}}(\Rnp))}
&\le R^{-1} \norm{u-v}_{L^q_\al (0,t; L^p (\Rnp))} \norm{u}_{L^q_\al (0,t; L^p (\Rnp))}^2 \\
&\le 4R \norm{u-v}_{L^q_\al (0,t; L^p (\Rnp))}.
\end{align*}
and
\begin{align*}
&\norm{\chi_v (u-v) \otimes u}_{L^{\frac{q}{2}}_{2\al}(0,\infty; L^{\frac{p}{2}}(\Rnp))}
+ \norm{\chi_v v \otimes (u-v)}_{L^{\frac{q}{2}}_{2\al}(0,\infty; L^{\frac{p}{2}}(\Rnp))} \\
&\le \norm{u-v}_{L^q_\al (0,t; L^p (\Rnp))} \Big(\norm{u}_{L^q_\al (0,t; L^p (\Rnp))} + \norm{v}_{L^q_\al (0,t; L^p (\Rnp))}\Big) \\
&\le 4R \norm{u-v}_{L^q_\al (0,t; L^p (\Rnp))}.
\end{align*}
\item
If $\chi_u(t)>0$ and $\chi_v(t)=0$, then by the similar way
\begin{align*}
\norm{(\chi_u - \chi_v) (u \otimes u)}_{L^{\frac{q}{2}}_{2\al}(0,t; L^{\frac{p}{2}}(\Rnp))}
&\le R^{-1} \norm{u-v}_{L^q_\al (0,t; L^p (\Rnp))} \norm{u}_{L^q_\al (0,t; L^p (\Rnp))}^2 \\
&\le 4R \norm{u-v}_{L^q_\al (0,t; L^p (\Rnp))}.
\end{align*}
and $\norm{\chi_v (u-v) \otimes u}_{L^{\frac{q}{2}}_{2\al}(0,\infty; L^{\frac{p}{2}}(\Rnp))}
= \norm{\chi_v v \otimes (u-v)}_{L^{\frac{q}{2}}_{2\al}(0,\infty; L^{\frac{p}{2}}(\Rnp))} = 0$.
\end{enumerate}
Thus, we use Theorem \ref{P31} to obtain that
\begin{align*}
\norm{S(u)-S(v)}_{\cL ^q_\al(0,\infty;L^p(\Rnp))}
&\lesssim \norm{\chi_u u \otimes u - \chi_v v \otimes v}_{\cL ^{\frac{q}{2}}_{2\al}(0,\infty;L^{\frac{p}{2}}(\Rnp))}\\
&\lesssim 4R \norm{u-v}_{L^q_\al (0,\infty; L^p (\Rnp))}.
\end{align*}
This proves the claim \eqref{E43}.
\item
From \eqref{E41} $S(\cC_R) \subset \cC_R$ and from \eqref{E43} the map $S$ is contractive in the complete space $\cC_R$ whenever $R \le \frac{1}{5C_1+C_2}$.
Therefore we obtain a unique solution $u$ to \eqref{E40} in $\cC_R$ by the Banach fixed point theorem.
Now, we define the stopping time
\[
\tau(\om) = \inf \set{0 \le T \le \infty\, : \norm{u}_{L^q_\al (0, T;  L^p (\Rnp))} \ge R}.
\]
If $\tau(\om) < \infty$, then we set $u(t) =0$ for $t \geq \tau(\om)$ so that $(u, \tau)$ is the unique solution in the sense of Definition \ref{D21}.

Notice from definition of $\tau$ that for all $h \in (0,\infty)$,
\[
\{\om \, | \, \tau(\om) \leq h\} \subset \{ \om \, | \, \|u\|_{ L^q_\al(0, h \wedge \tau;L^p(\Rnp))} \geq R\}.
\]
Using the Chebyshev inequality we obtain that for all $h \in (0,\infty)$,
\[
\bP (\tau = 0)
\le \bP (\{\tau \leq h\})
\leq \frac1{R}  \E \|u\|_{L^q_\al(0,h \wedge \tau;L^p(\Rnp))}
\]
Since $\|u\|_{L ^q_\al(0,h \wedge \tau;L^p(\Rnp))} \le \|u\|_{L ^q_\al(0,h;L^p(\Rnp))} \ri 0$ as $h \to 0$ almost surely, we have
\[
\bP (\tau = 0) = 0 \qand \bP (\tau > 0) = 1.
\]
By the definition of the stopping time $\tau$ we have
\[
\bP(\tau < \infty)
= \E 1_\set{\tau < \infty}
\le \E \Bigg(1_\set{\tau < \infty} \frac{\norm{u}_{L^q_\al (0, T;  L^p (\Rnp))}}{R}\Bigg)
\le \frac{1}{R} \E \Big(\lim_{T \to \infty} \norm{u}_{L^q_\al (0, T \wedge \tau;  L^p (\Rnp))}\Big)
\]
By Fatou's Lemma and \eqref{E41} it is bounded by
\[
\liminf_{T \to \infty} \frac{1}{R} \E \Big(\norm{u}_{L^q_\al (0, T \wedge \tau;  L^p (\Rnp))}\Big)
\le \frac{1}{R} \E \Big(\norm{u}_{L^q_\al (0, \infty;  L^p (\Rnp))}\Big)
\le \frac{5C_1 R^2}{R} = 5C_1 R.
\]
If $R < \frac{1}{5C_1+C_2} \wedge \frac{\ep}{5C_1}$, then
\[
\bP (\tau = \infty) = 1 -\bP (\tau < \infty)
\geq 1 - \ep.
\]
This completes the proof of Theorem \ref{T1}.
\qed
\end{enumerate}

\section{Proof of Theorem \ref{T2}}
\label{S5}
\setcounter{equation}{0}

We divide the proof into a few steps.

\begin{enumerate}[\bf{Step} 1)]
\item
We decompose the solution $u$ of \eqref{E11} formally as the sum
\begin{equation}
\label{E51}
u=v+V + u_2,
\end{equation}
where
\begin{align*}
v(x,t) &= \sum_{i=1}^n \inp{u_{0i}, K(x-\cdot,t)}, \\
V(x,t) &= -\int^t_{0}\int_{\Rnp}  K(x,y, t-s) \bP(\divg (\chi_R^u ( u \times u )(y,s) )\Big) dyds,\\
u_2(x,t) &= \int_0^t \inp{g(\cdot,s),K(x-\cdot, t-s)} dB_s.
\end{align*}
During the proof we shall verify that they are well-defined.
Here $K$ is defined in \eqref{E37}.
We shall give estimates of $v$, $V$, and $u_2$, separately.
\item
For $v$ we have the following estimate.
\begin{lemm}
\label{L51}
For $1 < p < \infty$, $1 \le q \le \infty$, and $\al > 0$,
\[
\norm{v}_{L^q (0, \infty; \dot B^{-2\al}_{pq}(\Rnp))}
\lesssim \norm{u_0}_{\dot B^{-2\al -\frac2q}_{pq0}(\Rnp)}.
\]
\end{lemm}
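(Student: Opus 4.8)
The plan is to exploit the explicit Stokes kernel $K$ from \eqref{E37}, which splits $v$ into a pure heat-kernel part and a Newtonian-potential correction, and to estimate each part by combining the time-integrated heat-semigroup bound of Lemma \ref{L24} with the spatial boundedness of the second-order Newtonian potential operator from Lemma \ref{L22}. Writing $v_i(x,t)=\sum_{j=1}^n\int_{\Rnp}K_{ij}(x,y,t)u_{0,j}(y)\,dy$ and using the first line of \eqref{E37}, the leading contribution is $\delta_{ij}\big(\Ga_t * u_{0,j}(x)-\Ga_t^{*} * u_{0,j}(x)\big)$. First I would extend each $u_{0,j}$ by zero to $\Rb$ (admissible because $u_0\in\dot B^{-2\al-\frac2q}_{pq0}(\Rnp)$ is a limit of $C^\infty_c(\Rnp)$ functions, so the zero-extension has comparable Besov norm), apply Lemma \ref{L24} with $\be=-2\al$ to both $\Ga_t * u_{0,j}$ and $\Ga_t^{*} * u_{0,j}$, and restrict to $\Rnp$. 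This bounds the $L^q(0,\infty;\dot B^{-2\al}_{pq}(\Rnp))$ norm of the leading part by $\norm{u_0}_{\dot B^{-2\al-\frac2q}_{pq0}(\Rnp)}$, as claimed.

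For the correction part, set $w_j(\cdot,t)=\Ga_t^{*} * u_{0,j}$ and write the corresponding contribution to $v_i$ as
\[
v_i^{(2)}(x,t)=4\sum_{j\neq n}D_{x_j}\int_0^{x_n}\int_{\Rn}D_{x_i}N(x-z)\,w_j(z,t)\,dz.
\]
The key observation is that, for each fixed $t$, the spatial map $w\mapsto v^{(2)}(\cdot,t)$ is of the same Helmholtz-projection / second-order Newtonian-potential type as the operator $\nabla_x^2\int_{\Rnp}N(\cdot-y)(\cdot)\,dy$ treated in Lemma \ref{L22}; indeed, after an integration by parts in $z_n$ the outer derivative $D_{x_j}$ and the inner derivative $D_{x_i}$ combine with the truncated kernel to land two derivatives on $N$. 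Hence Lemma \ref{L22} gives, for $0<2\al<1/p'$, the pointwise-in-time estimate $\norm{v^{(2)}(\cdot,t)}_{\dot B^{-2\al}_{pq}(\Rnp)}\lesssim\norm{\Ga_t^{*} * u_0}_{\dot B^{-2\al}_{pq}(\Rnp)}$. Taking the $L^q(0,\infty)$ norm in $t$ and invoking Lemma \ref{L24} once more for $\Ga_t^{*}$ controls this by $\norm{u_0}_{\dot B^{-2\al-\frac2q}_{pq0}(\Rnp)}$. Summing the two contributions, and in the case $q=\infty$ using instead the supremum version of Lemma \ref{L24}, completes the proof.

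The step I expect to be the main obstacle is the rigorous identification of the correction operator with a bounded second-order Newtonian-potential operator to which Lemma \ref{L22} applies. Because of the truncated integral $\int_0^{x_n}$ and the external derivative $D_{x_j}$, the operator is not literally $\nabla_x^2N*(\cdot)$, so the integration by parts in $z_n$ must be carried out with care, the boundary terms at $z_n=0$ and $z_n=x_n$ must be shown to cancel (using the reflection structure built into $\Ga_t^{*}$ and $N^{\pm}$), and one must verify that the resulting kernel stays within the scope of Lemma \ref{L22}. In particular the range restriction $0<2\al<1/p'$ has to be reconciled with the standing hypotheses; this is automatic in the regime $2\al=1-\frac{n}p-\frac2q$ of Theorems \ref{T1}--\ref{T2}, since then $2\al<1-\frac1p=1/p'$, but it is the delicate point that keeps the argument honest.
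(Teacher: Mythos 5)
Your proposal follows essentially the same route as the paper: extend $u_0$ by zero, split $v$ into the heat-kernel part $\Ga_t*\widetilde u_0-\Ga_t^**\widetilde u_0$ plus a second-order Newtonian-potential correction, bound the former with Lemma \ref{L24} and the latter pointwise in $t$ with Lemma \ref{L22} before integrating in time (the paper simply imports the ready-made decomposition $v=v_1+v_2+v_3$ from Theorem 4.1 of \cite{MR3474350} instead of re-deriving it from the kernel \eqref{E37}). The one small correction: the boundary terms in your integration by parts do not cancel but produce the extra piece $v_{3i}=-4\delta_{in}\sum_j R_j'\Ga_t^**\widetilde u_{0j}$, which is harmless since the tangential Riesz transforms are bounded on $L^p(\Rb)$, and your observation that $2\al<1/p'$ must be checked for Lemma \ref{L22} is a point the paper leaves implicit.
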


\begin{proof}
The zero extension of $u_0 \in \dot B_{pq0}^{-2\al-2/q}(\Rnp)$ will be denoted as $\widetilde{u}_0 \in \dot B_{pq}^{-2\al-2/q}(\R)$, which is defined for $f \in \dot B_{p'q'}^{2\al+2/q}(\R)$ by
\[
\inp{\widetilde{u}_0,f} = \inp{u_0,f|_{\Rnp}}.
\]
Note that $\norm{u_0}_{\dot B_{pq0}^{-2\al-2/q}(\Rnp)}$ is comparable to $\norm{\widetilde{u}_0}_{\dot B_{pq}^{-2\al-2/q}(\R)}$.
Following the proof of Theorem 4.1 in \cite{MR3474350}, we can decompose $v$ as the sum
\[
v(x,t) = v_{1}(x,t) + v_{2}(x,t) + v_{3}(x,t)
\]
where $v_1$, $v_2$ and $v_3$ are defined by
\begin{align*}
v_{1i}(x,t)
&= \Ga_t * \widetilde{u}_{0i}(x) - \Ga_t^* * \widetilde{u}_{0i}(x), \\
v_{2i}(x,t) 
&= 4 \frac{\p }{\p  x_i} \int_{\Rnp} \sum_{j=1}^{n-1} \frac{\p  }{\p y_j} (N(x-y) - N(x-y^*) ) \Ga_t^* * u_{0j} (y) dy \\ 
&\quad -2 \frac{\p }{\p  x_i} \int_{\Rnp} \frac{\p  }{\p  y_n} (N(x-y) - N(x-y^*) ) \sum_{j=1}^{n-1} R_j' \Ga_t^* * u_{0j}  (y) dy, \\
v_{3i}(x,t) &= -4\delta_{in}   \sum_{j=1}^{n-1} R'_j \Ga^*_t* \widetilde{u}_{0j}(x).
\end{align*}
Here $R_j'$ denotes the Riesz transform in $\bR^{n-1}$.
Since $R'_j$ is an $L^p(\bR^{n-1})$-multiplier, $R'_j$ is bounded in $L^p(\R)$ by Fubini's theorem.
Thus, we have
\begin{equation}
\label{E52}
\norm{v_1(t)}_{\dot B^{-2\al}_{pq} (\Rnp)} + \norm{v_3(t)}_{\dot B^{-2\al}_{pq} (\Rnp)}
\lesssim \|\Ga_t * \widetilde{u}_{0}\|_{\dot B^{-2\al}_{pq} (\R)} + \|\Ga_t^* * \widetilde{u}_{0}\|_{\dot B^{-2\al}_{pq} (\R)}
\end{equation}
and from Lemma \ref{L22}
\begin{align}
\label{E53}
\|v_2(t)\|_{\dot B^{-2\al}_{pq} (\Rnp)} \lesssim \|\Ga_t^* * \widetilde{u}_{0}\|_{\dot B^{-2\al}_{pq} (\R)}.
\end{align}
Using the estimates \eqref{E52}, \eqref{E53}, and Lemma \ref{L24}, we obtain that
\begin{align*}
\norm{v}_{L^q (0, \infty; \dot B^{-2\al}_{pq}(\Rnp))}
&\le \norm{v_1}_{L^q (0, \infty; \dot B^{-2\al}_{pq}(\Rnp))}
+ \norm{v_2}_{L^q (0, \infty; \dot B^{-2\al}_{pq}(\Rnp))}
+ \norm{v_3}_{L^q (0, \infty; \dot B^{-2\al}_{pq}(\Rnp))} \\
&\lesssim \norm{\Ga_t * \tilde u_0}_{L^q(0, \infty;  \dot B^{-2\al}_{pq} (\R)) }
+ \norm{\Ga_t^* * \tilde u_0}_{L^q(0, \infty;  \dot B^{-2\al}_{pq} (\R)) } \\
&\lesssim \norm{\tilde u_0}_{\dot B^{-2\al -\frac2q}_{pq}(\R)}
\lesssim \norm{u_0}_{\dot B^{-2\al -\frac2q}_{pq}(\Rnp)}.
\end{align*}
This completes the proof of Lemma \ref{L51}.
\end{proof}

\item
For $V$ we have the following estimate.
\begin{lemm}
\label{L52}
For $1 < p < \infty$, $1 \le q \le \infty$, and $\al > 0$,
\[
\|V\|_{L^q(0, \infty;\dot B^{-2\al}_{pq} (\Rnp))}
\lesssim \| u\|^2_{L^{q}_{ \al} (0, \tau(\om); L^{p} (\Rnp))}.
\]
\end{lemm}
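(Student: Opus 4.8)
The plan is to reduce the statement to a pathwise linear smoothing estimate for the Stokes Duhamel integral and then run a Hardy--Littlewood--Sobolev argument in time. First I would record that the forcing tensor $\cF:=\chi^u_R\,u\otimes u$ lies in $L^{q/2}_{2\al}(0,\infty;L^{p/2}(\Rnp))$ for almost every $\om$, with
\[
\norm{\cF}_{L^{q/2}_{2\al}(0,\infty;L^{p/2}(\Rnp))}\lesssim\norm{u}^2_{L^q_\al(0,\tau(\om);L^p(\Rnp))}.
\]
This is H\"older's inequality in $(t,x)$ combined with the fact that $\chi^u_R$ is supported where $u$ has not yet been truncated, so that $\chi^u_R\,u\otimes u=u\otimes u$ on $[0,\tau(\om)]$ and vanishes afterward. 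Hence it suffices to prove the deterministic bound $\norm{V}_{L^q(0,\infty;\dot B^{-2\al}_{pq}(\Rnp))}\lesssim\norm{\cF}_{L^{q/2}_{2\al}(0,\infty;L^{p/2}(\Rnp))}$ and then take expectations.

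Next I would decompose the kernel $K$ of \eqref{E37} exactly as in the proof of Lemma \ref{L51}, splitting $V$ into Duhamel integrals against the whole-space heat kernel $\Ga_{t-s}$ and its reflection $\Ga^*_{t-s}$, plus Newtonian--potential correction terms into which the Helmholtz projection is absorbed. The correction terms are reduced to the heat-kernel terms by Lemma \ref{L22} (applicable since the hypotheses give $0<2\al<1/p'$) together with the $L^p$-boundedness of the $(n-1)$-dimensional Riesz transforms, so the whole matter comes down to estimating $W(t)=\int_0^t\Ga_{t-s}*\divg\cF(s)\,ds$ (and the identical reflected term) in $L^q(0,\infty;\dot B^{-2\al}_{pq}(\Rnp))$.

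The heart of the argument is a fixed-lag smoothing estimate. By Minkowski's inequality I would pull the Besov norm inside the time integral,
\[
\norm{W(t)}_{\dot B^{-2\al}_{pq}(\Rnp)}\le\int_0^t\norm{\Ga_{t-s}*\divg\cF(s)}_{\dot B^{-2\al}_{pq}(\Rnp)}\,ds,
\]
and then, for each fixed lag $r=t-s$, establish
\[
\norm{\Ga_r*\divg h}_{\dot B^{-2\al}_{pq}(\Rnp)}\lesssim r^{-\la}\norm{h}_{L^{p/2}(\Rnp)},\qquad \la=\tfrac12+\tfrac{n}{2p}-\al,
\]
by applying Lemma \ref{L23} frequency by frequency together with Bernstein's inequality, which contributes one factor $2^{j}$ for $\divg$ and one factor $2^{jn/p}$ for the passage $L^{p/2}\to L^p$. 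The resulting series $\sum_j\big(2^{(1+\frac{n}{p}-2\al)j}e^{-cr2^{2j}}\big)^q$ converges at both ends, since the exponent $1+\tfrac{n}{p}-2\al$ is positive (as $2\al<1$) while the Gaussian factor kills the high frequencies, and its sum is comparable to $r^{-\la q}$. Inserting this into the Minkowski bound gives $\norm{W(t)}_{\dot B^{-2\al}_{pq}}\lesssim I_\la\big(\norm{\cF(\cdot)}_{L^{p/2}}\big)(t)$, and I would finish by invoking Lemma \ref{L26} with source $L^{q/2}_{2\al}$ and target $L^q$. The endpoint $q=\infty$ is handled directly, since there $1-\la-2\al=0$ and $\int_0^t(t-s)^{-\la}s^{-2\al}\,ds$ is a finite Beta integral independent of $t$.

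I expect the main obstacle to be the rigidity of the exponents. The smoothing rate $\la$ is fixed by homogeneity, so the Hardy--Littlewood--Sobolev step closes only because its admissibility identity $\la=1-\tfrac1q-2\al$ coincides with the smoothing rate, which happens precisely when the scaling relation $\tfrac{n}{p}+\tfrac2q+2\al=1$ holds in this regime. One must therefore simultaneously verify the borderline conditions $0<2\al<1/p'$ (for Lemma \ref{L22}), $1+\tfrac{n}{p}-2\al>0$ (for low-frequency summability), and $0<\la<1$ with the weight constraints of Lemma \ref{L26}; checking these, together with the separate bookkeeping for the reflected kernel, the Newtonian corrections, and the $q=\infty$ limit, is where the real work lies.
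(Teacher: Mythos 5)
Your proposal is correct and follows the same overall architecture as the paper's proof: the same kernel decomposition into heat Duhamel terms plus Newtonian-potential corrections controlled by Lemma \ref{L22} and the $(n-1)$-dimensional Riesz transforms, the same reduction of $\chi^u_R\,u\otimes u$ to $\norm{u}^2_{L^q_\al(0,\tau;L^p)}$ via H\"older with $(p_1,q_1,\al_1)=(p/2,q/2,2\al)$, and the same closing application of Lemma \ref{L26} in time. Where you genuinely diverge is in the proof of the central smoothing estimate, the paper's \eqref{E55}: the paper argues by duality, testing $D_{x'}{\mathcal U}F'$ against $\psi\in L^{q'}(0,\infty;\dot B^{2\al}_{p'q'0}(\Rnp))$, moving the derivative onto $\widetilde{\psi}$, splitting $\Ga_{t-s}=\Ga_{(t-s)/2}*\Ga_{(t-s)/2}$, and invoking Young's inequality together with Lemma \ref{L25} on the dual side to extract the lag factor $(t-s)^{\frac n2(\frac1{p_1'}-\frac1{p'})-\frac12+\al}$; you instead prove the primal fixed-lag bound $\norm{\Ga_r*\divg h}_{\dot B^{-2\al}_{pq}}\lesssim r^{-\la}\norm{h}_{L^{p/2}}$ directly, frequency by frequency, from Lemma \ref{L23} and Bernstein. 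The two rates agree (your $\la=\frac12+\frac n{2p}-\al$ is minus the paper's exponent at $p_1=p/2$), so the Hardy--Littlewood--Sobolev step closes identically. Your route is more elementary and self-contained --- no duality and no interpolation-based Lemma \ref{L25} --- at the mild cost of discarding frequency cancellation by bounding $\norm{\De_j h}_{L^{p/2}}\le\norm{h}_{L^{p/2}}$, which is harmless because the Gaussian factor resums the series to the correct power of $r$. Two bookkeeping points you raise are real but are shared by the paper's own proof, so they are not defects of your argument: both proofs implicitly need $p\ge2$ and $q>2$ (so that $p_1=p/2$ and $q_1=q/2$ exceed $1$), and both need the scaling relation $\frac np+\frac2q+2\al=1$, which is exactly what the paper's choice $q_1=q/2$, $p_1=p/2$, $\al_1=2\al$ in \eqref{E32} forces; your explicit Beta-integral treatment of $q=\infty$ is, if anything, more careful than the paper's, since Lemma \ref{L26} as stated requires $q<\infty$.
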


\begin{proof}
We can decompose $V$ as the sum
\[
V(x,t) = V_{1}(x,t) + V_{2}(x,t) + V_{3}(x,t),
\]
where $V_1$, $V_2$ and $ V_3$ are defined by
\begin{align*}
V_{1i}(x,t)
&= D_{x'} \big({\mathcal U} F'(x,t) -  {\mathcal U}^*F'(x,t) \big) \\
&:= \int_0^t \int_{\Rnp} D_{y'} (\Ga(x-y, t-s)-\Ga(x-y^*, t-s) )F'_i(y, s) dy ds, \\
V_{2i}(x,t) 
&= 4 \frac{\p }{\p  x_i} \int_{\Rnp} \sum_{j=1}^{n-1} \frac{\p  }{\p y_j} (N(x-y) - N(x-y^*)) D_{y'} {\mathcal U}^* F'_j (y,t) dy \\
&\quad - 2 \frac{\p }{\p  x_i} \int_{\Rnp} \frac{\p  }{\p  y_n} (N(x-y) - N(x-y^*) ) \sum_{j=1}^{n-1} R_j' D_{y'} {\mathcal U}^* F'_j (y,t) dy, \\
V_{3i}(x,t) &= -4\delta_{in}   \sum_{j=1}^{n-1}  R'_j D_{x'} {\mathcal U}^* F'_j(x,t),
\end{align*}
with $F_{ij} = \chi_R^u u_i u_j$.
Notice that
\begin{equation}
\label{E54}
\begin{split}
\norm{V_k(t)}_{\dot B^{-2\al}_{pq} (\Rnp)}
&\le \norm{V_1(t)}_{\dot B^{-2\al}_{pq} (\Rnp)}
+ \norm{V_2(t)}_{\dot B^{-2\al}_{pq} (\Rnp)}
+ \norm{V_3(t)}_{\dot B^{-2\al}_{pq} (\Rnp)} \\
&\lesssim \|D_{x'}{\mathcal U}F'(t)\|_{\dot B^{-2\al }_{pq} (\Rnp)} + \|D_{x'}{\mathcal U}^*F'(t)\|_{\dot B^{-2\al }_{pq} (\Rnp)}.
\end{split}
\end{equation}
If $p, q, \al, p_1, q_1, \al_1$ satisfy $0 < \frac12 - \frac{1}{2}(\frac{n}{p_1'}-\frac{n}{p'}) - \al < 1$, $1 < q_1 \le q < \infty$, $0 \le \al_1 < 1-\frac1{q_1}$, and $\bigg(\frac{n}{p_1}-\frac{n}{p}\bigg) + \bigg(\frac{2}{q_1}-\frac{2}{q}\bigg) + 2(\al_1-\al) = 1$, then
\begin{equation}
\label{E55}
\| D_{x'}{\mathcal U}F' \|_{ L^q (0, \infty; \dot B^{-2\alpha}_{pq}(\Rnp) )}
+ \| D_{x'}{\mathcal U}^* F' \|_{ L^q (0, \infty; \dot B^{-2\alpha}_{pq}(\Rnp) )}
\lesssim \| F'\|_{L^{q_1}_{ \alpha_1} (0, \infty; L^{p_1} (\Rnp))}.
\end{equation}
Since the proof of this technical estimate \eqref{E55} is a little bit long, we postpone its proof to the next step.
Combining \eqref{E54}, \eqref{E55}, and \eqref{E22}, we obtain that
\[
\norm{V}_{L^q(0, \infty;\dot B^{-2\al}_{pq} (\Rnp))}
\lesssim \| F\|_{L^{q_1}_{ \alpha_1} (0, \infty; L^{p_1} (\Rnp))}.
\]
Finally, we can take $q_1=\frac{q}{2}$, $p_1=\frac{p}{2}$, and $\al_1=2\al$ so that the exponents $p, q, \al, p_1, q_1, \al_1$ satisfy every conditions on the exponents and
\[
\| F\|_{L^{q_1}_{ \alpha_1} (0, \infty; L^{p_1} (\Rnp))}
\lesssim \| u\|^2_{L^{q}_{ \al} (0, \tau(\om); L^{p} (\Rnp))}.
\]
This completes the proof of Lemma \ref{L52}.
\end{proof}

\item
Now we prove the technical estimate \eqref{E55}.
Since the estimate of $D_{x'}{\mathcal U}^* F'$ can be derived by exactly the same way as that of $D_{x'}{\mathcal U}F'$, we focus on estimating the term $D_{x'}{\mathcal U}F'$, which is done by duality argument.
If $\psi \in L^{q'}(0, \infty;\dot B^{2\alpha}_{p'q'0} (\Rnp))$, then by Fubini's theorem
\begin{align*}
&\intzi \int_{\Rnp} D_{x'}{\mathcal U} F'(x,t) \psi(x,t) dxdt \\
&= \intzi \int_{\R} D_{x'}{\mathcal U} F'(x,t) \widetilde{\psi}(x,t) dxdt \\
&= \intzi \int_{\R} \Bigg(\int_0^t \int_{\Rnp} F'(y,s) D_{y'} \Ga(x-y, t-s) dy ds\Bigg) \widetilde{\psi}(x,t) dx dt \\
&= \intzi \int_0^t \int_{\Rnp} F'(y,s) \int_{\R} D_{y'} \Ga(x-y, t-s) \widetilde{\psi}(x,t) dx dy ds dt.
\end{align*}
Integrating by parts and using H\"older's inequality, we have
\begin{align*}
&\Bigg| \int_{\Rnp} F'(y,s) \int_{\R} D_{y'} \Ga(x-y, t-s) \widetilde{\psi}(x,t) dx dy \Bigg| \\
&= \Bigg| \int_{\Rnp} F'(y,s) \int_{\R} \Ga(y-x, t-s) D_{x'} \widetilde{\psi}(x,t) dx dy \Bigg| \\
&= \Bigg| \int_{\Rnp} F'(y,s) \Ga_{t-s}*D' \widetilde{\psi}(y,t) dy \Bigg| \\
&\le \norm{F'(s)}_{L^{p_1}(\Rnp)} \norm{\Ga_{t-s}*D' \widetilde{\psi}(t)}_{L^{p_1'}(\Rnp)}.
\end{align*}
Using the semigroup property of the heat kernel and Young's convolution inequality, we get
\begin{align*}
\norm{\Ga_{t-s}*D' \widetilde{\psi}(t)}_{L^{p_1'}(\Rnp)}
&\le \norm{\Ga_{t-s}*D' \widetilde{\psi}(t)}_{p_1'} \\
&= \norm{\Ga_{(t-s)/2}*\Ga_{(t-s)/2}*D' \widetilde{\psi}(t)}_{p_1'} \\
&\le \norm{\Ga_{(t-s)/2}}_{L^{r}(\R)} \norm{\Ga_{(t-s)/2}*D' \widetilde{\psi}(t)}_{p'} \\
&\lesssim (t-s)^{-\frac{n}{2}(1-\frac{1}{r})} \norm{\Ga_{(t-s)/2}*D' \widetilde{\psi}(t)}_{p'} \\
&= (t-s)^{\frac{n}{2}(\frac{1}{p_1'}-\frac{1}{p'})} \norm{\Ga_{(t-s)/2}*D' \widetilde{\psi}(t)}_{p'},
\end{align*}
where $1+1/p_1'=1/r+1/p'$.
Since Lemma \ref{L25} yields
\[
\norm{\Ga_{(t-s)/2}*D' \widetilde{\psi}(t)}_{p'}
\lesssim (t-s)^{-\frac12 + \al} \|D' \widetilde{\psi}(t)\|_{\dot B^{-1 +2\al}_{p'q'}(\R)}
\lesssim (t-s)^{-\frac12 + \al} \|\widetilde{\psi}(t)\|_{\dot B^{2\al}_{p'q'}(\R)},
\]
we obtain that
\[
\norm{\Ga_{t-s}*D' \psi(t)}_{L^{p_1'}(\Rnp)}
\lesssim (t-s)^{\frac{n}{2}(\frac{1}{p_1'}-\frac{1}{p'})-\frac12 + \al} \|\psi(t)\|_{\dot B^{2\al}_{p'q'}(\Rnp)}.
\]
Combining the estimate above and then using H\"older's equality, we get
\begin{align*}
&\Bigg| \intzi \int_{\Rnp} D_{x'}{\mathcal U}F'(x,t) \psi(x,t) dxdt \Bigg| \\
&\le \intzi \int_0^t \norm{F'(s)}_{L^{p_1}(\Rnp)} (t-s)^{\frac{n}{2}(\frac{1}{p_1'}-\frac{1}{p'})-\frac12 + \al} \|\psi(t)\|_{\dot B^{2\al}_{p'q'}(\Rnp)} ds dt \\
&= \intzi I_\lambda f(t) \|\psi(t)\|_{\dot B^{2\al}_{p'q'}(\Rnp)} dt \\
&\le \norm{I_\lambda f}_{L^q(0, \infty)} \norm{\psi}_{L^{q'}(0, \infty; {\dot B^{2\al}_{p'q'}(\Rnp)})},
\end{align*}
where
\[
\lambda := -\frac{n}{2}(\frac{1}{p_1'}-\frac{1}{p'}) + \frac12 - \al \qand f(s) := \norm{F'(s)}_{L^{p_1}(\Rnp)}.
\]
If $0 < \lambda < 1$, $1 < q_1 \le q < \infty$, $0 \le \al_1 < 1/q_1'$, and $1+1/q=1/q_1+\lambda+\al_1$, then by Lemma \ref{L26}
\[
\norm{I_\lambda f}_{L^q(0, \infty)}
\lesssim \norm{f}_{L^{q_1}_{ \alpha_1} (0, \infty)}
= \norm{F'}_{L^{q_1}_{ \alpha_1} (0, \infty; L^{p_1} (\Rnp))}.
\]
Thus, we have
\[
\Bigg| \intzi \int_{\Rnp} D_{x'}{\mathcal U}F'(x,t) \psi(x,t) dxdt \Bigg|
\lesssim \norm{F'}_{L^{q_1}_{ \alpha_1} (0, \infty; L^{p_1} (\Rnp))}
\norm{\psi}_{L^{q'}(0, \infty; {\dot B^{2\al}_{p'q'}(\Rnp)})}
\]
and therefore by duality $\| D_{x'}{\mathcal U}F' \|_{ L^q (0, \infty; \dot B^{-2\alpha}_{pq}(\Rnp) )}
\lesssim \| F'\|_{L^{q_1}_{ \alpha_1} (0, \infty; L^{p_1} (\Rnp))}$.
This completes the proof of the estimate \eqref{E55}.

\item
For $u_2$ we have the following estimate.
\begin{lemm}
\label{L53}
For $1 < p < \infty$, $1 \le q \le \infty$, and $\al > 0$,
\[
\E \intzi  \|u_2(t)\|^q_{\dot B^{-2\al}_{pq}(\Rnp)}dt
\lesssim  \E \intzi  \| g(t)\|^q_{\dot B^{-2\al-1}_{pq0}(\Rnp)}dt.
\]
\end{lemm}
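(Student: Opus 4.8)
The plan is to estimate the stochastic term $u_2(x,t) = \int_0^t \inp{g(\cdot,s), K(x-\cdot, t-s)} dB_s$ by combining the Burkholder--Davis--Gundy inequality with the heat kernel estimate already developed in Lemma \ref{L24}. Since the $\dot B^{-2\al}_{pq}$-norm is defined via the Littlewood--Paley pieces $\De_j$, and $q$ is both the time integrability exponent and the summation exponent in the Besov norm, the case $q \ge 2$ will let me interchange the roles of these exponents cleanly. First I would apply the BDG inequality in the probability variable to pass from the $q$-th moment of the stochastic integral to an expression involving the quadratic variation $\int_0^t \abs{T_K g(x,t,s)}^2 ds$, exactly as was done in the proof of Proposition \ref{P31}. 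The structure of the kernel $K$ in \eqref{E37} reduces the analysis, via the pointwise bound from Section 3.1 of \cite{MR3780493}, to the two convolution terms $\Ga_{t-s} * g$ and $\Ga_{t-s}^* * g$, so the entire estimate follows from the mapping properties of the heat semigroup on Besov spaces.

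The key steps, in order, would be: apply BDG to bound $\E \intzi \norm{u_2(t)}^q_{\dot B^{-2\al}_{pq}} dt$ from above by an expectation of a time integral of the quadratic variation of the Littlewood--Paley pieces $\De_j u_2$; use Minkowski's integral inequality to move the $\ell^q_j$ and $L^p_x$ norms inside the $ds$-integral (this is where $q \ge 2$ matters, so the $\ell^2$ from BDG sits inside the $\ell^q$ from the Besov norm); and finally invoke the localized heat kernel decay $\norm{\De_j(\Ga_{t-s} * g)}_p \lesssim \exp(-c(t-s)2^{2j}) \norm{\De_j g}_p$ from Lemma \ref{L23}. The remaining computation is then a deterministic convolution estimate in the time variable: for each fixed frequency $j$, one integrates $\exp(-c(t-s)2^{2j})$ against the quadratic variation and sums over $j$ with the weights $2^{-2\al j}$ coming from the target norm and $2^{(-2\al-1)j}$ from the source norm $\dot B^{-2\al-1}_{pq0}$. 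The gain of one derivative (the shift from $-2\al$ to $-2\al-1$ in the source space) is precisely what the $L^2$-in-time quadratic variation buys, in complete analogy with how Lemma \ref{L24} and Lemma \ref{L25} trade heat-kernel smoothing for a loss of $2/q$ derivatives.

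I expect the main obstacle to be handling the two regimes $2 \le q < \infty$ and $q = \infty$ on the same footing, since the Besov norm switches from an $\ell^q$-sum to a supremum, while the BDG inequality naturally produces an $\ell^2$ structure in time from the quadratic variation. For finite $q$ the nesting of norms is straightforward because $q \ge 2$ guarantees that Minkowski's inequality goes in the favorable direction; for $q = \infty$ one must instead take a supremum over $j$ outside a single frequency estimate, and verify that the BDG constant and the heat-integral $\intzi \exp(-c t 2^{2j}) dt = (c 2^{2j})^{-1}$ still conspire to give the one-derivative gain uniformly in $j$. A secondary technical point is justifying the passage of $\De_j$ through the stochastic integral, i.e. that $\De_j u_2(t) = \int_0^t \De_j T_K g(\cdot,t,s)\, dB_s$, which follows from the fact that $\De_j$ is a fixed (deterministic, $s$-independent) Fourier multiplier commuting with the Bochner--Itô integral; once this commutation and the kernel reduction are in place, the rest is the same heat-semigroup bookkeeping already carried out in Lemmas \ref{L23}--\ref{L25}.
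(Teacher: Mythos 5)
Your proposal follows essentially the same route as the paper: reduce to the heat--semigroup terms $\Gamma_{t-s}*g$ and $\Gamma_{t-s}^**g$, then prove a frequency-localized estimate for each $\Delta_j$ via the Burkholder--Davis--Gundy inequality, Minkowski's integral inequality, the localized heat-kernel decay of Lemma \ref{L23}, and Young's convolution inequality in time, before summing over $j$ with the weights $2^{-2\al qj}$ versus $2^{-(2\al+1)qj}$ --- this is exactly the paper's estimate \eqref{E56}. The one point to adjust is the reduction step: the $L^p$-level pointwise bound from Section 3.1 of \cite{MR3780493} does not directly control the negative-order Besov norm $\dot B^{-2\al}_{pq}$ (absolute values do not commute with $\Delta_j$), so the paper instead uses the explicit Solonnikov-type decomposition $u_2=u_{21}+u_{22}+u_{23}$ together with the boundedness of the Newtonian-potential correction on $\dot B^{-2\al}_{pq}$ from Lemma \ref{L22}; with that substitution your argument matches the paper's.
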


\begin{proof}
We can decompose $u_2$ as the sum
\[
u_{2i}(x,t) = u_{21i}(x,t) + u_{22i}(x,t) + u_{23i}(x,t),
\]
where $u_{21i}$, $u_{22i}$ and $ u_{23i}$ are defined by
\begin{align*}
u_{21i}(x,t)
&={\mathcal U} \widetilde{g}(x,t) - {\mathcal U}^* \widetilde{g}(x,t)\\
&:= \int_0^t \Ga_{t-s} * \widetilde{g_i}(\cdot,s)(x) - \Ga_{t-s}^* * \widetilde{g_i}(\cdot,s)(x) dB_s, \\
u_{22i}(x,t) 
&= 4 \frac{\p }{\p  x_i}     \int_{\Rnp} \sum_{j=1}^{n-1} \frac{\p  }{\p  y_j} (N(x-y) - N(x-y^*) ) {\mathcal U}^* \widetilde{g}_j (y,s) dy \\
&\quad -2 \frac{\p }{\p x_i} \int_{\Rnp} \frac{\p  }{\p  y_n} (N(x-y) - N(x-y^*) ) \sum_{j=1}^{n-1} R_j' {\mathcal U}^* \widetilde{g}_j (y,s) dy \\
u_{23i}(x,t) &= -4\delta_{in}   \sum_{j=1}^{n-1} R'_j {\mathcal U}^* \widetilde{g}_j(x,t).
\end{align*}
Thus, by Lemma \ref{L22} we get
\[
\sum_{k=1}^3\|u_k(t)\|_{\dot B^{-2\al}_{pq} (\Rnp)}
\lesssim \|{\mathcal U} \widetilde{g}(t)\|_{\dot B^{-2\al}_{pq} (\R)} + \|{\mathcal U}^* \widetilde{g}(t)\|_{\dot B^{-2\al}_{pq} (\R)}.
\]
If we prove that for $j \in \N$,
\begin{equation}
\label{E56}
\E \intzi  2^{qj}\|{\De_j \mathcal U}\widetilde{g}(t)\|^q_p dt
\lesssim \E\int^\infty_0\|\De_j \widetilde{g}(t)\|^q_p dt,
\end{equation}
then we obtain that
\begin{align*}
\E \intzi  \| {\mathcal U} \widetilde{g}(t)\|^q_{\dot B^{-2\al}_{pq}(\R)}dt
&= \E \intzi  \sum_{-\infty< j< \infty} 2^{-2\al q j} \|\De_j {\mathcal U}\widetilde{g}(t)\|^q_p dt\\
&\lesssim \E\int^\infty_0 \sum_{-\infty< j< \infty} 2^{-( 1 +2\al )q j}\|\De_j \widetilde{g}(t)\|^q_pdt\\
&= \E \intzi  \| \widetilde{g}(t)\|^q_{\dot B^{-2\al-1}_{pq0}(\R)}dt.
\end{align*}
\end{proof}

\item
Fianlly, we prove the estimate \eqref{E56}.
Notice that $p > n \ge 2$.
The Burkholder-Davis-Gundy inequality (Section 2.7 in \cite{MR1661766}) yields
\begin{align*}
&\E \intzi \|{\De_j \mathcal U}\widetilde{g}(t)\|^q_p dt \\
&= \E \intzi  \Big(\intRn \Bigg|\int_0^t \De_j (\Ga_{t-s} * \widetilde{g})(x,s) dB_s \Bigg|^p dx \Big)^{\frac{q}{p}} dt \\
&\lesssim \E \intzi \left(\intRn \left(\int_0^t |\De_j (\Ga_{t-s} * \widetilde{g})(x,s)|^2 ds\right)^{p/2} dx\right)^{q/p} dt.
\end{align*}
By Minkowski's integral inequality and Lemma \ref{L23} we obtain
\begin{align*}
&\E \intzi \left(\intRn \left(\int_0^t |\De_j (\Ga_{t-s} * \widetilde{g})(x,s)|^2 ds\right)^{p/2} dx\right)^{q/p} dt \\
&\le \E \intzi \left(\int_0^{t} \left(\intRn |\De_j (\Ga_{t-s} * \widetilde{g})(x,s)|^p dx\right)^{2/p} ds\right)^{q/2} dt \\
&= \E \intzi \left( \int_0^{t} \norm{\De_j (\Ga_{t-s} * \widetilde{g})(s)}_p^2 ds\right)^{q/2} dt \\
&\lesssim \E \intzi \left( \int_0^{t} \exp(-c(t-s)2^{2j}) \norm{\De_j \widetilde{g}(s)}_p^2 ds\right)^{q/2} dt
\end{align*}
for some positive constant $c$.
By Young's convolution inequality we have
\begin{align*}
&\E \intzi \left(\int_0^{t} \exp(-c(t-s)2^{2j}) \norm{\De_j \widetilde{g}(s)}_p^2 ds\right)^{q/2} dt \\
&\le \E \left(\intzi \exp(-ct2^{2j}) dt\right)^{q/2} \intzi \norm{\De_j \widetilde{g}(t)}_p^q dt \\
&= (c2^{2j})^{-q/2} \E \intzi \norm{\De_j \widetilde{g}(t)}_p^q dt.
\end{align*}
Thus, we get \eqref{E56}.

From \eqref{E51}, Lemma \ref{L51}, Lemma \ref{L52}, and Lemma \ref{L53}, we get the desired result.
This completes the proof of Theorem \ref{T2}.
\qed
\end{enumerate}

\section*{Acknowledgement}

T. C. has been supported by the National Research Foundation of Korea No. 2020R1A2C1A01102531.
M. Y. has been supported by the National Research Foundation of Korea No. 2016R1C1B2015731 and No.~2015R1A5A1009350.

\end{document}